\theoremstyle{plain}
\newtheorem{theorem}{Theorem}[section]
\newtheorem{lemma}[theorem]{Lemma}
\theoremstyle{remark}
\newtheorem{definition}[theorem]{Definition}
\begin{document}

\begin{frontmatter}
\title{Fused $\ell_{1}$ Trend Filtering on Graphs}
%\title{A sample article title with some additional note\thanksref{t1}}
\runtitle{Fused  $\ell_{1}$ Trend Filtering on Graphs}
%\thankstext{T1}{A sample additional note to the title.}

\begin{aug}
%%%%%%%%%%%%%%%%%%%%%%%%%%%%%%%%%%%%%%%%%%%%%%%
%% Only one address is permitted per author. %%
%% Only division, organization and e-mail is %%
%% included in the address.                  %%
%% Additional information can be included in %%
%% the Acknowledgments section if necessary. %%
%% ORCID can be inserted by command:         %%
%% \orcid{0000-0000-0000-0000}               %%
%%%%%%%%%%%%%%%%%%%%%%%%%%%%%%%%%%%%%%%%%%%%%%%
%\author[A]{\fnms{Vladimir}~\snm{Pastukhov}\ead[label=e1]{vlapas@chalmers.se}}
\author{\fnms{Vladimir}~\snm{Pastukhov}}
%%%%%%%%%%%%%%%%%%%%%%%%%%%%%%%%%%%%%%%%%%%%%%
%% Addresses                                %%
%%%%%%%%%%%%%%%%%%%%%%%%%%%%%%%%%%%%%%%%%%%%%%
%\address[A]{Department of Computer Science and Engineering, \\
%Chalmers University of Technology
%\printead[presep={,\ }]{e1}}
\end{aug}

\begin{abstract}
This paper is dedicated to the fused trend filtering on a general graph, which is a combination of fused estimator and 1-st order trend filtering on a graph. There are two cases of fusion regularisers studied in this work: anisotropic total variation (i.e. fused lasso) and nearly-isotonic restriction. For the trend filtering part we consider general trend filtering on a given graph and Kronecker trend filter for the case of lattice data. We show how these estimators are related to each other and propose a computationally feasible numerical solution with a linear complexity per iteration with respect to the amount of edges in the graph.
\end{abstract}

\begin{keyword}
\kwd{Fused lasso}
\kwd{nearly-isotonic regression}
\kwd{trend filtering}
\kwd{graph smoothing}
\kwd{nonparametric regression}
\end{keyword}

\end{frontmatter}
%%%%%%%%%%%%%%%%%%%%%%%%%%%%%%%%%%%%%%%%%%%%%%
%% Please use \tableofcontents for articles %%
%% with 50 pages and more                   %%
%%%%%%%%%%%%%%%%%%%%%%%%%%%%%%%%%%%%%%%%%%%%%%
%\tableofcontents

\section{Introduction}
This work is motivated by recent papers in regularized nonparametric estimation of signals over multidimensional grids and over the nodes of general graphs. 
We study combination of fused lasso and nearly-isotonically constrained estimators with two versions of trend filters.

Let us consider the following nonparametric model 
\begin{equation*}\label{}
\bm{Y} = \bm{\mathring{\beta}} + \bm{\varepsilon},
\end{equation*}
where $\bm{\mathring{\beta}}\in\mathbb{R}^{n}$ is an unknown signal, and the error term $\bm{\varepsilon}\in\mathcal{N}(\bm{0}, \sigma^{2}\bm{I})$, with some $\sigma < \infty$. We start with one-dmencional case and introduce the estimators that are used in this paper. Throughout the paper, the notations "regression", "estimator", "approximator" and "filter" are used interchangeably. 

For a given sequence of data points $\bm{y} \in \mathbb{R}^{n}$ and some penalization parameter $\lambda_{F} > 0$ the fused lasso estimator (cf. \cite{hoefling2010path, rinaldo2009properties, tibshirani2011solution, tibshirani2005sparsity}) is given by
\begin{equation}\label{FL1d}
\hat{\bm{\beta}}^{FL}(\bm{y}, \lambda_{FL}) = \underset{\bm{\beta} \in \mathbb{R}^{n}}{\arg \min} \, \frac{1}{2} ||\bm{y} - \bm{\beta}||_{2}^{2} +\lambda_{F} \sum_{i=1}^{n-1}|\beta_{i} - \beta_{i+1}|.
\end{equation}
Fused lasso estimator transforms original signal $\bm{y}$ to the piece-wise constant sequence.

Next, in the shape-constrained inference the simplest constrained signal estimation problem is the isotonic estimator in one dimension. For a given sequence of data points $\bm{y} \in \mathbb{R}^{n}$ the isotonic regression is 
\begin{equation}\label{orMLEp}
\hat{\bm{\beta}}^{I} = \underset{\bm{\beta} \in \mathbb{R}^{n}}{\arg\min}||\bm{y} - \bm{\beta}||_{2}^{2}, \quad \text{subject to} \quad \beta_{1} \leq \beta_{2} \leq \dots \leq \beta_{n}, 
\end{equation}
i.e. it is the $\ell^{2}$-projection of the vector $\bm{y}$ onto the cone of non-increasing vectors in $\mathbb{R}^{n}$. For the  general introduction to the isotonic regression we refer to the monographs \cite{brunk1972statistical, robertson1988order}.

The less restrictive version of the isotonic regression is nearly-isotonic regression, which was introduced in \cite{tibshirani2011nearly} and studied for the case of general dimensions in \cite{minami2020estimating, pastukhov2022fused}. The nearly-isotonic regression is given by the following optimization problem
\begin{equation}\label{NI1d}
\hat{\bm{\beta}}^{NI}(\bm{y}, \lambda_{NI}) = \underset{\bm{\beta} \in \mathbb{R}^{n}}{\arg \min} \, \frac{1}{2} ||\bm{y} - \bm{\beta}||_{2}^{2} + \lambda_{NI}\sum_{i=1}^{n-1}|\beta_{i} - \beta_{i+1}|_{+},
\end{equation}
where $x_{+} =  x \cdot 1\{x > 0 \}$ and $\lambda_{NI} > 0$. The nearly-isotonic approximator makes the original input piece-wise monotone and for a large value of $\lambda_{NI}$ we have $\hat{\bm{\beta}}^{NI}(\bm{y}, \lambda_{NI}) = \hat{\bm{\beta}}^{I} $, i.e. it is equivalent to the isotonic regression.

Next, note that the estimators defined in (\ref{FL1d}) and (\ref{NI1d}) can be re-writen as
\begin{equation}\label{FL1d_D}
\hat{\bm{\beta}}^{FL}(\bm{y}, \lambda_{F}) = \underset{\bm{\beta} \in \mathbb{R}^{n}}{\arg \min} \, \frac{1}{2} ||\bm{y} - \bm{\beta}||_{2}^{2} +\lambda_{F} \sum_{i=1}^{n-1}|D^{f}\bm{\beta}|,
\end{equation}
and 
\begin{equation}\label{NI1d_D}
\hat{\bm{\beta}}^{NI}(\bm{y}, \lambda_{NI}) = \underset{\bm{\beta} \in \mathbb{R}^{n}}{\arg \min} \, \frac{1}{2} ||\bm{y} - \bm{\beta}||_{2}^{2} + \lambda_{NI}\sum_{i=1}^{n-1}|D^{f}\bm{\beta}|_{+},
\end{equation}
respectively, where $D^{f} \in \mathbb{R}^{(n-1)\times n}$ is the following matrix:
\begin{equation}\label{Df1d}
D^{f} = 
\begin{pmatrix}
1 & -1 & 0 & 0 & 0 & \dots & 0 & 0 \\
0 & 1 & -1 & 0 & 0 & \dots & 0 & 0 \\
0 & 0 & 1 & -1 & 0 & \dots & 0 & 0 \\
\vdots & \vdots & \vdots & \vdots  & \vdots  & \vdots & \vdots & \vdots \\
0 & 0 & 0 & 0 & 0 & \dots & 1 & -1
\end{pmatrix}
\end{equation}

Further, in the paper \cite{kim2009ell_1} the authors introduced and studied the following one dimensional approximation
\begin{equation}\label{TF1d}
\hat{\bm{\beta}}^{TF}(\bm{y}, \lambda_{T}) = \underset{\bm{\beta} \in \mathbb{R}^{n}}{\arg \min} \, \frac{1}{2} ||\bm{y} - \bm{\beta}||_{2}^{2} +\lambda_{T} \sum_{i=2}^{n-1}|\beta_{i-1} - 2\beta_{i} + \beta_{i+1}|,
\end{equation}
which is called $\ell_{1}$ trend filtering. The output of $\ell_{1}$ trend filter is a piece-wise linear signal and for a large value of $\lambda_{T}$ it is equivalent to the best $\ell_{2}$ linear approximation of the original signal $\bm{y}$. 

Next, the problem in (\ref{TF1d}) can be re-written as 
\begin{equation}\label{TF1d_D}
\hat{\bm{\beta}}^{TF}(\bm{y}, \lambda_{T}) = \underset{\bm{\beta} \in \mathbb{R}^{n}}{\arg \min} \, \frac{1}{2} ||\bm{y} - \bm{\beta}||_{2}^{2} +\lambda_{T} \sum_{i=2}^{n-1}|D^{t}\bm{\beta}|,
\end{equation}
where $D^{t} \in \mathbb{R}^{(n-2)\times n}$ is the following matrix:
\begin{equation}\label{Mtr1d}
D^{t} = 
\begin{pmatrix}
1 & -2 & 1 & 0 & 0 & \dots & 0 & 0 & 0 \\
0 & 1 & -2 & 1 & 0 & \dots & 0 & 0 & 0 \\
0 & 0 & 1 & -2 & 1 & \dots & 0 & 0 & 0 \\
\vdots & \vdots & \vdots & \vdots  & \vdots & \vdots & \vdots & \vdots & \vdots \\
0 & 0 & 0 & 0 & 0 & \dots & 1  & -2 & 1
\end{pmatrix}
\end{equation}
Note that $D^{f}$ and $D^{t}$ are related in the following way: 
\begin{equation*}
D^{t} =  [D^{f}]_{\{1:(n-2),1:(n-1)\}}D^{f},
\end{equation*}
where the notation $[A]_{\{1:k,1:l\}}$ for a matrix $A \in \mathbb{R}^{n \times m}$ means the the matrix $A$ without last $(n-k)$ rows and $(m-l)$ columns.

In the next subsection we introduce notation and state the problem for the case of estimation on a general grid and graph. 

\subsection{Notation}
Let $G=(V,E)$ be a directed graph, where $V = \{\bm{v}_{1}, \dots, \bm{v}_{n}\}$ is the set of vertices and $E = \{\bm{e}_{1}, \dots, \bm{e}_{m}\}$ is the set of edges. Next, let $D$ denote the oriented incidence matrix for the directed graph $G$. We choose the orientation of $D$ in the following way. Assume that the graph $G$ with $n$ vertices has $m$ edges and we label the vertexes by $\{1, \dots, n\}$ and edges by $\{1, \dots, m\}$. Then $D$ is $m\times n$ matrix with
\begin{equation}\label{grlbls}
D_{i,j} =   \begin{cases}
   1, & \quad \text{if vertex $j$ is the source of edge $i$} , \\
    -1, & \quad \text{if vertex $j$ is the target of edge $i$},\\
    0, & \quad \text{otherwise}.
  \end{cases}
\end{equation}

Now we can introduce the fussed lasso estimator of the signal over the nodes of the graph $G$, which is the following optimization problem:
\begin{equation}\label{FLG}
\hat{\bm{\beta}}^{FL}(\bm{y}, \lambda_{F}) = \underset{\bm{\beta} \in \mathbb{R}^{n}}{\arg \min} \, \frac{1}{2} ||\bm{y} - \bm{\beta}||_{2}^{2} +\lambda_{F}\sum_{(\bm{i},\bm{j})\in E}|\beta_{\bm{i}} - \beta_{\bm{j}}|,
\end{equation}
or, equivalently, using the incidence matrix $D$ of the graph $G$:
\begin{equation}\label{FLD}
\hat{\bm{\beta}}^{FL}(\bm{y}, \lambda_{F}) = {} 
\underset{\bm{\beta} \in \mathbb{R}^{n}}{\arg \min} \, \frac{1}{2} ||\bm{y} - \bm{\beta}||_{2}^{2} + \lambda_{F} ||D\bm{\beta}||_{1}.
\end{equation}

Fused lasso over a grid was first introduced in the signal processing area in \cite{rudin1992nonlinear}. The path solution for the case of a general graph was obtained in \cite{hoefling2010path}. The exact solution for the lattice data in the case of different penalization parameters using taut-string algorithm is given in \cite{barbero2018modular}. Some recent results in the fused lasso estimation of signal over the graphs are \cite{chen2023more, padilla2017dfs, padilla2022variance}.

Further, let $\bm{y} \in \mathbb{R}^{n}$ be a real valued signal indexed by some index set $\mathcal{I} = \{\bm{i}_{1}, \dots, \bm{i}_{n}\}$. Next, we define partial order relation $\preceq$ on $\mathcal{I}$ and isotonic regression with respect to $\preceq$ on $\mathcal{I}$.  

\begin{definition}\label{de1}
A binary relation $\preceq$ on $\mathcal{I}$ is called partial order if 
\begin{itemize}
\item it is reflexive, i.e. $\bm{j}\preceq\bm{j}$ for all $\bm{j} \in \mathcal{I}$;
\item it is transitive, i.e. $\bm{j}_{1}, \bm{j}_{2}, \bm{j}_{3} \in \mathcal{I}$, $\bm{j}_{1} \preceq \bm{j}_{2}$ and $\bm{j}_{2} \preceq \bm{j}_{3}$ imply $\bm{j}_{1} \preceq \bm{j}_{3}$;
\item it is antisymmetric, i.e. $\bm{j}_{1}, \bm{j}_{2} \in \mathcal{I}$, $\bm{j}_{1} \preceq \bm{j}_{2}$ and $\bm{j}_{2} \preceq \bm{j}_{1}$ imply $\bm{j}_{1} = \bm{j}_{2}$.
\end{itemize}
\end{definition}

A vector $\bm{\beta}\in\mathbb{R}^{n}$ indexed by $\mathcal{I}$ is called isotonic with respect to $\preceq$ on $\mathcal{I}$ if $\bm{j}_{1} \preceq \bm{j}_{2}$ implies $\beta_{\bm{j}_{1}} \leq \beta_{\bm{j}_{2}}$. We use the notation $\bm{\mathcal{B}}^{is}$ for the set of all isotonic vectors in $\mathbb{R}^{n}$ with respect to $\preceq$ on $\mathcal{I}$. The set $\bm{\mathcal{B}}^{is}$ is also called isotonic cone in $\mathbb{R}^{n}$. Next, a vector $\bm{\beta}^{I}\in \mathbb{R}^{n}$ is isotonic regression of a vector $\bm{y} \in \mathbb{R}^{n}$ over the set $\mathcal{I}$ with a partial order $\preceq$ on it if 
\begin{eqnarray}\label{Ipo}
\bm{\beta}^{I} = \underset{\bm{\beta} \in \bm{\mathcal{B}}^{is}}{\arg \min} \sum_{\bm{j} \in \mathcal{I}}(\beta_{\bm{j}} - y_{\bm{j}})^{2}.
\end{eqnarray}
In the area of isotonic estimation over partially ordered sets some recent results are \cite{deng2021confidence, han2019isotonic, han2020limit, pananjady2022isotonic}.

Further, for any index set $\mathcal{I}$ with a partial order relation $\preceq$ on $\mathcal{I}$ there exists directed graph $G = (V,E)$, with $V = \mathcal{I}$ (or the set of vertices $V$ is indexed by $\mathcal{I}$) and 
\begin{eqnarray}
E = \{(\bm{j}_{1}, \bm{j}_{2}), \, \text{where} \, (\bm{j_{1}},\bm{j_{2}}) \,\text{ is the ordered pair of vertices from } \, \mathcal{I}\},
\end{eqnarray}
such that a vector $\bm{\beta} \in \mathbb{R}^{n}$ is isotonic with respect to $\preceq$ on $\mathcal{I}$ iff $\beta_{\bm{l_{1}}} \leq \beta_{\bm{l_{2}}}$, given that $E$ contains the chain of edges from $\bm{l}_{1} \in V$ to  $\bm{l}_{2} \in V$. Therefore,  the generalisation of the nearly-isotonic regression defined in (\ref{NI1d}) and (\ref{NI1d_D}) are given by
\begin{equation}\label{NIG}
\hat{\bm{\beta}}^{NI}(\bm{y}, \lambda_{NI}) = \underset{\bm{\beta} \in \mathbb{R}^{n}}{\arg \min} \, \frac{1}{2} ||\bm{y} - \bm{\beta}||_{2}^{2} + \lambda_{NI}\sum_{(\bm{i},\bm{j})\in E}|\beta_{\bm{i}} - \beta_{\bm{j}}|_{+},
\end{equation}
and
\begin{equation}\label{NID}
\hat{\bm{\beta}}^{NI}(\bm{y}, \lambda_{NI}) = {} 
 \underset{\bm{\beta} \in \mathbb{R}^{n}}{\arg \min} \, \frac{1}{2} ||\bm{y} - \bm{\beta}||_{2}^{2} + \lambda_{NI} ||D\bm{\beta}||_{+},
\end{equation}
consequently, where $D$ is the incidence matrix of the directed graph $G$ corresponding to the partial order $\preceq$.

Next, we generalise one dimensional $\ell_{1}$ trend filtering defined in (\ref{TF1d}). Let $G = (V, E)$ be a directed graph with the incidence matrix $D$. We consider two generalisations of one dimensional trend filtering. 

First, the generalisation proposed in \cite{wang2015trend}, which we address simply as general trend filtering, is given by
\begin{equation}\label{GTF}
\begin{aligned}
\hat{\bm{\beta}}^{TF}(\bm{y}, \lambda_{T}) = {} 
 \underset{\bm{\beta} \in \mathbb{R}^{n}}{\arg \min} \, \frac{1}{2} ||\bm{y} - \bm{\beta}||_{2}^{2} + \lambda_{T}\sum_{i=1}^{n}n_{i}|\beta_{\bm{i}} - \frac{1}{n_{i}}\sum_{j:(\bm{i},\bm{j})\in E}\beta_{\bm{j}}|,
\end{aligned}
\end{equation}
with $n_{i}$ the number of neighbours of the vertex $i$,  or, equivalently,
\begin{equation}\label{GTFL}
\hat{\bm{\beta}}^{TF}(\bm{y}, \lambda_{T}) = {} 
 \underset{\bm{\beta} \in \mathbb{R}^{n}}{\arg \min} \, \frac{1}{2} ||\bm{y} - \bm{\beta}||_{2}^{2} +\lambda_{T}||L\bm{\beta}||_{1},
\end{equation}
where $L = D^{T}D$ is the Laplacian of the graph $G$.

Second, if we estimate the signal defined over a multidimensional grid, then we can apply one dimensional $\ell_{1}$ trend filtering defined in (\ref{TF1d}) along each dimension in the grid separately. This approach, which is called called Kronecker trend filter, was first proposed in \cite{kim2009ell_1} and then studied in detail in \cite{sadhanala2021multivariate}. The estimator can also be written in the matrix form as
\begin{equation}\label{KTF}
\hat{\bm{\beta}}^{KTF}(\bm{y}, \lambda_{T}) = {} 
 \underset{\bm{\beta} \in \mathbb{R}^{n}}{\arg \min} \, \frac{1}{2} ||\bm{y} - \bm{\beta}||_{2}^{2} +\lambda_{T}||K||_{1},
\end{equation}
where for $k$-dimensional grid $\{1, \dots, n\}^{k}$ (depending on the numeration of the vertices) the matrix $K$ has the following form
\begin{equation}\label{Kmtrx}
K = \begin{bmatrix}
D^{t} & \otimes & I_{n} & \otimes & \cdots & \otimes & I_{n} \\
I_{n} & \otimes & D^{t} & \otimes & \cdots & \otimes & I_{n} \\
 &  &  &  & \vdots &  &  \\
I_{n} & \otimes & I_{n} & \otimes & \cdots & \otimes & D^{t}
\end{bmatrix}, 
\end{equation}
where $D^{t} \in \mathbb{R}^{(n-2)\times n}$ is the matrix for one dimensional trend filtering defined in (\ref{Mtr1d}), $I_{n}\in \mathbb{R}^{n\times n}$, and $A\otimes B$ denotes the Kronecker product of matrices $A$ and $B$. For more details of Kroneker trend filtering we refer to the original paper \cite{sadhanala2021multivariate}. Note, that in one dimensional case Kronecker trend filtering and general trend filtering are the same.

In Appendix \ref{appA} we provide two examples which clarify general notation given in the subsection above.  

%Next, to clarify notation introduced above, let us consider two simple examples. First assume that $G$ is the following chain graph displayed at Figure~\ref{1dftfgr}.
%\begin{figure}[b]
%\begin{center}
%\includegraphics[width=3in]{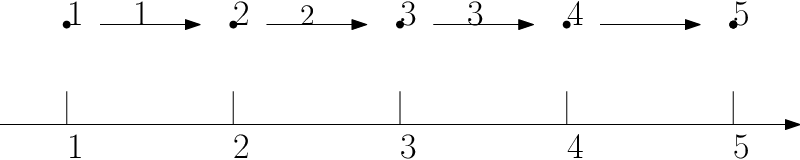}\\
%\end{center}
%\caption{Graph $G=(V,E)$ which corresponds to the chain graph. \label{1dftfgr}}
%\end{figure}
%The incidence matrix $D$ of $G$ is given in (\ref{incm1d}). Also, note that $D = D^{f}$, defined in (\ref{Df1d})

%\begin{equation}\label{incm1d}
%D = 
%\begin{pmatrix}
%1 & -1 & 0 & 0 & 0 \\
%0 & 1 & -1 & 0 & 0\\
%0 & 0 & 1 & -1 & 0 \\
%0 & 0 & 0 &  1 & -1
%\end{pmatrix}
%\end{equation}

%\begin{figure}[h!]
%\begin{center}
%\includegraphics[width=3in]{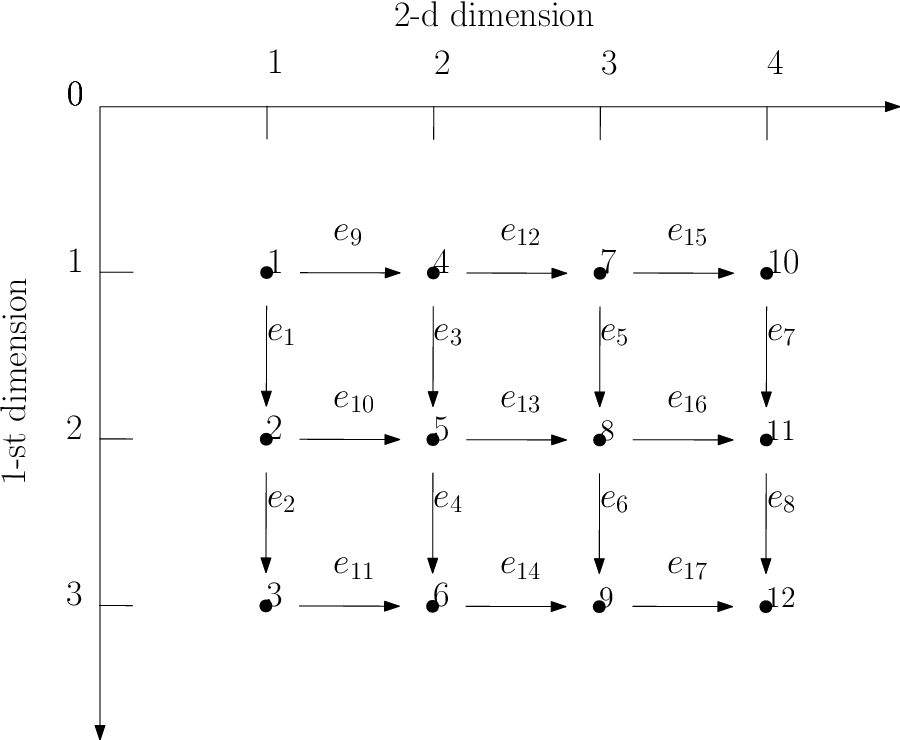}\\
%\end{center}
%\caption{Graph $G=(V,E)$ which corresponds to the two dimensional equally spaced grid. \label{2dftfgr}}
%\end{figure}
%The oriented incidence matrix $D \in \mathbb{R}^{17\times 12}$ corresponding to the graph $G=(V,E)$ displayed in (\ref{2dDmat}) in Appendix \ref{appA}.

\subsection{General statement of the problem}
Now we are ready to state the general problem studied in this paper. Let $\bm{y} \in \mathbb{R}^{n}$ be a signal indexed by the index set $\mathcal{I}$ with the partial order relation $\preceq$ defined on $\mathcal{I}$. Next, let $G=(V,E)$ be the directed graph corresponding to $\preceq$ on $\mathcal{I}$. 

The nearly-isotonic $\ell_{1}$ trend filter of the signal $\bm{y}$ is:
\begin{equation}\label{NITFG}
\hat{\bm{\beta}}^{NITF}(\bm{y}, \lambda_{NI}, \lambda_{T}) = {} 
 \underset{\bm{\beta} \in \mathbb{R}^{n}}{\arg \min} \, \frac{1}{2} ||\bm{y} - \bm{\beta}||_{2}^{2} + \lambda_{NI} ||D\bm{\beta}||_{+} +\lambda_{T}||\Delta \bm{\beta}||_{1},
\end{equation}
while fused lasso trend filtering on graph $G$ is, consequently, given by
\begin{equation}\label{FLTFL}
\hat{\bm{\beta}}^{FLTF}(\bm{y}, \lambda_{F}, \lambda_{T}) = {} 
 \underset{\bm{\beta} \in \mathbb{R}^{n}}{\arg \min} \, \frac{1}{2} ||\bm{y} - \bm{\beta}||_{2}^{2} + \lambda_{F} ||D\bm{\beta}|| +\lambda_{T}||\Delta \bm{\beta}||_{1},
\end{equation}
where in both cases the matrix $\Delta$ is either
\begin{equation*}\label{}
\Delta = \begin{cases}
    L, &\text{for general trend filtering over the graph}, \\
    K, &\text{for Kronecker trend filtering over the lattice}.
  \end{cases}
\end{equation*}
The combination of different trend filters and combination of trend filters with shape constraints in one dimensional case were discussed in the conclusion of \cite{ramdas2016fast}, where the authors proposed a modification of ADMM algorithm for the numerical solution to different trend filters in one dimensional case.

\subsection{Organisation of the paper}
The paper is organised as follows. In Section \ref{solest} we provide a general solution to the fused $\ell_{1}$ trend filtering over a general graph, show how the fussed lasso trend filter and nearly-isotonic trend filter are related to each other and discuss some properties of the solution.  Next, in Section \ref{compest} we discuss computational aspects of the estimators, do the simulation study and apply the estimator to the real data set. Lastly, in Section \ref{disc} we provide a conclusion and discuss possible further generalisations for the trend filtering problems. The Python implementation of the estimators is available upon request.

\section{Solution to the fused $\ell_{1}$ trend filtering}\label{solest}
First, we consider fussed lasso $\ell_{1}$ trend filter. As mentioned in the Introduction, the combination of different trend filters in one dimensional case (i.e. mixed trend filtering) was proposed in \cite{ramdas2016fast}. Therefore, first, following \cite{ramdas2016fast} let us solve the problem using a regular ADMM algorithm, which has the following update at each iteration:
%\begin{algorithm}
\begin{eqnarray}\label{ADMM}
 \bm{\beta} & \leftarrow & (I + \rho_{1} D^{T}D + \rho_{2} \Delta^{T}\Delta)^{-1}(\bm{y} + \rho_{1} D^{T}(\bm{\alpha}_{1} + \bm{u}_{1}) + \rho_{2} \Delta^{T}(\bm{\alpha}_{2} + \bm{u}_{2})    )\label{ADMMinv}\\
\bm{\alpha}_{1} & \leftarrow &  S_{\lambda_{F}/\rho_{1}}(D \bm{\beta}  - \bm{u}_{1}),\\
\bm{\alpha}_{2} & \leftarrow &  S_{\lambda_{T}/\rho_{2}}(\Delta \bm{\beta}  - \bm{u}_{2}),\\  
\bm{u}_{1} & \leftarrow & \bm{u}_{1} + \bm{\alpha}_{1} - D\bm{\beta},\\
\bm{u}_{2} & \leftarrow & \bm{u}_{2} + \bm{\alpha}_{2} - \Delta\bm{\beta}\label{ADMM1},
\end{eqnarray}
%\end{algorithm}
where $S_{\lambda/\rho}$ is soft-thresholding at the level $\lambda/\rho$. Further, in this paper we consider two ways to implement the first step in (\ref{ADMMinv}): conjugent gradient iteration and sparse Cholesky decomposition. %The time complexity of this update is linear in $n$, cf. \cite{ramdas2016fast}.

%Further, in the case of nearly-isotonic $\ell_{1}$ trend filter, the ADMM update is basically the same, except for the step of $\bm{\alpha}_{1}$ update, which in this case is the following, cf. \cite{ramdas2016fast} :
%\begin{equation*}\label{}
%\bm{\alpha}_{1} \leftarrow 
% \underset{\bm{\alpha} \in \mathbb{R}^{n}}{\arg \min} \, \frac{1}{2} ||D \bm{\beta}  - \bm{u}_{1} - \bm{\alpha}||_{2}^{2} + \frac{\lambda_{NI}}{\rho_{1}} ||D\bm{\alpha}||_{+}.
%\end{equation*}
%In the case of a general partial order relation (or, equivalently, general directed graph) this nearly-isotonic regression update is not linear in time. According to the discussion in \cite{minami2020estimating}, the exact solution for a general directed graph could be obtained using max-flow algorithm with the complexity of $\mathcal{O}(n|E|\log\frac{n^{2}}{|E|})$. Also, exact solution can be obtained via fused lasso solution, using Proposition 1 in \cite{pastukhov2022fused}. Also, as it follows from \cite{pastukhov2022fused}, one can have numerical quite fast solution using OSQP algorithm, which will be discussed below.

Another approach is to solve the problems in (\ref{NITFG}) and (\ref{FLTFL}) via their dual problems. The solutions are given in the next theorem.
\begin{theorem}\label{sol_ftf}
For a fixed data vector $\bm{y} \in \mathbb{R}^{n}$ indexed by the index set $\mathcal{I}$ with the partial order relation $\preceq$ defined on $\mathcal{I}$ and the penalisation parameters $\lambda_{NI}$, $\lambda_{F}$ and $\lambda_{T}$ the solution to the nearly-isotonic trend filtering in (\ref{NITFG}) is given by
\begin{equation}\label{NITF_sol}
\hat{\bm{\beta}}^{NITF}(\bm{y}, \lambda_{NI}, \lambda_{T}) = \bm{y} - D^{T} \hat{\bm{\mu}} - \Delta^{T} \hat{\bm{\nu}}
\end{equation} 
with
\begin{equation}\label{u_solni}
\begin{bmatrix}
\hat{\bm{\mu}} \\
\hat{\bm{\nu}} 
\end{bmatrix}=
\underset{\bm{\mu}, \bm{\nu} }{\arg \min} \, \frac{1}{2}||\bm{y} - D^{T}\bm{\mu} - \Delta^{T} \bm{\nu}||_{2}^{2},
\end{equation}
subject to
\begin{eqnarray*}
\bm{0} \leq& \bm{\mu} &\leq  \lambda_{NI}\bm{1},\\
-\lambda_{T} \bm{1} \leq& \bm{\nu} &\leq  \lambda_{T}\bm{1},\\
\end{eqnarray*}
and solution to the fused lasso trend filtering is
\begin{equation}\label{FLTF_sol}
\hat{\bm{\beta}}^{FLTF}(\bm{y}, \lambda_{F}, \lambda_{T}) = \bm{y} - D^{T} \hat{\bm{\mu}} - \Delta^{T} \hat{\bm{\nu}}
\end{equation} 
with
\begin{equation}\label{u_solfl}
\begin{bmatrix}
\hat{\bm{\mu}} \\
\hat{\bm{\nu}} 
\end{bmatrix}=
\underset{\bm{\mu}, \bm{\nu} }{\arg \min} \, \frac{1}{2}||\bm{y} - D^{T}\bm{\mu} - \Delta^{T} \bm{\nu}||_{2}^{2},
\end{equation}
subject to
\begin{eqnarray*}
-\lambda_{F} \bm{1} \leq \bm{\mu} \leq  \lambda_{F}\bm{1},\\
-\lambda_{T} \bm{1} \leq \bm{\nu} \leq  \lambda_{T}\bm{1},\\
\end{eqnarray*}
where 
\begin{equation*}\label{}
\Delta = \begin{cases}
    L, &\text{for general trend filtering over the graph}, \\
    K, &\text{for Kronecker trend filtering over the lattice},
  \end{cases}
\end{equation*}
$D$ is the incidence matrix of the directed graph $G = (V, E)$ corresponding to $\preceq$ on  $\mathcal{I}$, $\bm{0}$ is zero vector,  $\bm{1}$ is the vector whose all elements are equal to $1$ and the notation $\bm{a} \leq \bm{b}$ for vectors means $a_{i} \leq b_{i}$ for all $i$.
\end{theorem}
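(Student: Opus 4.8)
The plan is to derive both formulas as consequences of Lagrangian/Fenchel duality applied to the penalized least-squares problems in \eqref{NITFG} and \eqref{FLTFL}, treating the fused-lasso case and the nearly-isotonic case in parallel since they differ only in how the $\ell_1$ / hinge penalty on $D\bm\beta$ is dualized. First I would rewrite each primal problem by introducing auxiliary variables $\bm{z}_1 = D\bm\beta$ and $\bm{z}_2 = \Delta\bm\beta$, so that \eqref{FLTFL} becomes $\min_{\bm\beta,\bm z_1,\bm z_2}\tfrac12\|\bm y-\bm\beta\|_2^2+\lambda_F\|\bm z_1\|_1+\lambda_T\|\bm z_2\|_1$ subject to the two linear equality constraints. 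Introducing Lagrange multipliers $\bm\mu$ for $\bm z_1 = D\bm\beta$ and $\bm\nu$ for $\bm z_2=\Delta\bm\beta$, the Lagrangian separates over $\bm\beta$, $\bm z_1$, $\bm z_2$. Minimizing over $\bm\beta$ gives the stationarity relation $\bm\beta - \bm y + D^T\bm\mu + \Delta^T\bm\nu = \bm 0$, i.e. $\hat{\bm\beta} = \bm y - D^T\bm\mu - \Delta^T\bm\nu$, which is exactly \eqref{FLTF_sol} (and \eqref{NITF_sol}) once we show strong duality holds so the primal optimum is recovered from the dual optimum.

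Next I would carry out the partial minimizations over $\bm z_1$ and $\bm z_2$. For the fused-lasso term, $\min_{\bm z_1}\{\lambda_F\|\bm z_1\|_1 - \bm\mu^T\bm z_1\}$ equals $0$ if $\|\bm\mu\|_\infty\le\lambda_F$ and $-\infty$ otherwise, which produces the box constraint $-\lambda_F\bm 1\le\bm\mu\le\lambda_F\bm 1$; identically, the trend-filtering term yields $-\lambda_T\bm 1\le\bm\nu\le\lambda_T\bm 1$. For the nearly-isotonic case the only change is that the penalty is $\lambda_{NI}\|\bm z_1\|_+ = \lambda_{NI}\sum_i (z_{1,i})_+$, whose conjugate is the indicator of the box $[0,\lambda_{NI}]$ coordinatewise (since $\sup_{t}\{\mu t - \lambda_{NI}t_+\}$ is $0$ when $0\le\mu\le\lambda_{NI}$ and $+\infty$ otherwise), giving $\bm 0\le\bm\mu\le\lambda_{NI}\bm 1$ as in \eqref{u_solni}. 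Substituting $\hat{\bm\beta}=\bm y - D^T\bm\mu-\Delta^T\bm\nu$ back into the remaining $\bm\beta$-terms of the Lagrangian, $\tfrac12\|\bm y-\bm\beta\|_2^2 + \bm\mu^T D\bm\beta + \bm\nu^T\Delta\bm\beta$, and simplifying (a routine completion-of-squares computation using $\|\bm y-\hat{\bm\beta}\|_2^2 = \|D^T\bm\mu+\Delta^T\bm\nu\|_2^2$) shows the dual objective equals a constant minus $\tfrac12\|\bm y - D^T\bm\mu - \Delta^T\bm\nu\|_2^2$, so maximizing the dual is the same as the minimization in \eqref{u_solfl} / \eqref{u_solni}.

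Finally I would justify strong duality and the recovery of the primal solution: the primal objective is a sum of a strongly convex quadratic and a finite-valued convex polyhedral penalty, so it has a unique minimizer and Slater's condition holds trivially (the constraints $\bm z_i = \cdot$ are affine with nonempty relative interior), hence there is no duality gap and the KKT stationarity condition $\hat{\bm\beta} = \bm y - D^T\hat{\bm\mu} - \Delta^T\hat{\bm\nu}$ at the dual optimizers $(\hat{\bm\mu},\hat{\bm\nu})$ delivers the claimed formulas. The one genuine subtlety — really the only place needing care — is that the dual minimizers $(\hat{\bm\mu},\hat{\bm\nu})$ need not be unique (the matrix $[D^T\ \Delta^T]$ can have a nontrivial kernel, e.g. $\bm 1\in\ker D$), but I would note that $D^T\hat{\bm\mu}+\Delta^T\hat{\bm\nu}$ — the only quantity that enters \eqref{FLTF_sol} — is the unique projection of $\bm y$ onto the polytope $\{D^T\bm\mu+\Delta^T\bm\nu : \bm\mu\in\text{box},\ \bm\nu\in\text{box}\}$, so $\hat{\bm\beta}$ is well-defined regardless. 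This mirrors the dual-path derivations for the fused lasso in \cite{tibshirani2011solution} and for trend filtering in \cite{wang2015trend}, and the argument is identical for $\Delta = L$ and $\Delta = K$ since neither the duality derivation nor the strong-convexity argument uses any structure of $\Delta$ beyond linearity.
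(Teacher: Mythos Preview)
Your proposal is correct and follows essentially the same Lagrangian-duality route as the paper: introduce auxiliary variables for $D\bm\beta$ and $\Delta\bm\beta$, form the Lagrangian, minimize separately over $\bm\beta$ and the auxiliaries to obtain the box-constrained dual, and recover $\hat{\bm\beta}$ from stationarity. Your write-up is in fact somewhat more careful than the paper's --- you explicitly invoke Slater's condition for strong duality and address the potential non-uniqueness of $(\hat{\bm\mu},\hat{\bm\nu})$, both of which the paper leaves implicit --- though your parenthetical example ``$\bm 1\in\ker D$'' is not quite the right illustration (that concerns $\ker D$, not $\ker[D^T\ \Delta^T]$).
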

\begin{proof}
First, following, for example, the derivations of generalised lasso  in \cite{tibshirani2011solution} and fused nearly-isotonic regression in \cite{pastukhov2022fused} we can write the optimization problem in (\ref{NITFG}) in the following way
\begin{equation*}\label{}
\underset{\bm{\beta}, \bm{z}}{\text{minimize}} \, \frac{1}{2} ||\bm{y} - \bm{\beta}||_{2}^{2} + \lambda_{NI}||\bm{s}||_{+} +  \lambda_{F} ||\bm{t}||_{1}  \quad \text{s. t.} \quad D \bm{\beta} = \bm{z} \quad \text{and } \quad  \Delta \bm{\beta} = \bm{t}.
\end{equation*}

Next, the Lagrangian is given by
\begin{eqnarray}\label{Larg }
L(\bm{\beta}, \bm{s}, \bm{t}, \bm{\mu}, \bm{\nu}) = \frac{1}{2} ||\bm{y} - \bm{\beta}||_{2}^{2} +  \lambda_{NI}||\bm{s}||_{+} + \lambda_{F} ||\bm{t}||_{1} + \\ \nonumber
\bm{\mu}^{T}(D \bm{\beta} - \bm{s}) + \bm{\nu}^{T}(\Delta \bm{\beta} - \bm{t}),
\end{eqnarray}
where $\bm{\mu}$ and $\bm{\nu}$ are dual variables.

Further, note that we have 
\begin{eqnarray}\label{sminni}
\underset{\bm{s}}{\min} \big(\lambda_{NI} ||\bm{s}||_{+}  - \bm{\mu}^{T}\bm{s}\big) &=&
  \begin{cases}
    0, & \quad \text{if} \quad  \bm{0} \leq \bm{\mu} \leq  \lambda_{NI}\bm{1}, \\
    -\infty, & \quad \text{otherwise},
  \end{cases}\\
\underset{\bm{t}}{\min} \big(\lambda_{T} ||\bm{z}||_{1} - \bm{\nu}^{T}\bm{t}\big) &=&
  \begin{cases}
    0, & \quad \text{if} \quad -\lambda_{T} \bm{1} \leq \bm{\nu} \leq \lambda_{T}\bm{1}, \\
    -\infty, & \quad \text{otherwise},\nonumber
  \end{cases}
\end{eqnarray}
and
\begin{eqnarray*}\label{}
\underset{\bm{\beta}}{\min} \big( \frac{1}{2} ||\bm{y} - \bm{\beta}||_{2}^{2} + \bm{\mu}^{T} D \bm{\beta} + \bm{\nu}^{T} \Delta \bm{\beta}\big) &=&\\ -\frac{1}{2}\bm{\mu}^{T}DD^{T}\bm{\mu} &-&\frac{1}{2}\bm{\nu}^{T}\Delta\Delta^{T}\bm{\nu} + \bm{y}^{T}D^{T}\bm{\mu} + \bm{y}^{T}\Delta^{T}\bm{\nu}  =\\
 &-& \frac{1}{2} ||\bm{y} - D^{T}\bm{\mu} - \Delta^{T}\bm{\nu}||_{2}^{2} + \frac{1}{2}\bm{y}^{T}\bm{y}.
\end{eqnarray*}
Therefore, the dual function is
\begin{equation*}\label{}
\begin{aligned}
&g(\nu) ={} \underset{\bm{\beta}, \bm{s}, \bm{t}}{\min} \, L(\bm{\beta}, \bm{s}, \bm{t}, \bm{\mu}, \bm{\nu}) =\\
  &\begin{cases}
    -\frac{1}{2} ||\bm{y} - D^{T}\bm{\mu} - \Delta^{T}\bm{\nu}||_{2}^{2}  + \frac{1}{2}\bm{y}^{T}\bm{y}, & \quad \text{if} \quad  \bm{0} \leq \bm{\mu} \leq \lambda_{NI}\bm{1}, \quad   -\lambda_{T} \bm{1} \leq \bm{\nu} \leq \lambda_{T} \bm{1}, \\
    -\infty, & \quad \text{otherwise},
  \end{cases}
\end{aligned}
\end{equation*}
and, therefore, we have the following dual problem:
\begin{equation*}
\begin{bmatrix}
\hat{\bm{\mu}} \\
\hat{\bm{\nu}} 
\end{bmatrix}=
\underset{\bm{\mu}, \bm{\nu} }{\arg \min} \, \frac{1}{2}||\bm{y} - D^{T}\bm{\mu} - \Delta^{T} \bm{\nu}||_{2}^{2},
\end{equation*}
subject to
\begin{eqnarray*}
\bm{0} \leq& \bm{\mu} &\leq  \lambda_{NI}\bm{1},\\
-\lambda_{T} \bm{1} \leq& \bm{\nu} &\leq  \lambda_{T}\bm{1}.\\
\end{eqnarray*}

Next, the proof for the case of fused lasso $\ell_{1}$ trend filtering is virtually the same with the change of (\ref{sminni}) by
\begin{equation*}\label{}
\underset{\bm{s}}{\min} \big(\lambda_{F} ||\bm{s}|| - \bm{\mu}^{T}\bm{s}\big) =
  \begin{cases}
    0, & \quad \text{if} \quad  -\lambda_{F}\bm{1} \leq \bm{\mu} \leq  \lambda_{F}\bm{1}, \\
    -\infty, & \quad \text{otherwise}.
  \end{cases}
\end{equation*}

Finally, from subgradient equation we obtain 
\begin{equation*}\label{}
\hat{\bm{\beta}} - \bm{y} + D^{T} \hat{\bm{\mu}} + \Delta^{T} \hat{\bm{\nu}} = 0,
\end{equation*} 
which finished the proof.
\end{proof}

Next, we show how nearly-isotonic $\ell_{1}$ trend filter and fused lasso $\ell_{1}$ trend filter are related to each other.
\begin{theorem}\label{rel_ftf}
Assume we have  a fixed data vector $\bm{y} \in \mathbb{R}^{n}$ indexed by the index set $\mathcal{I}$ with the partial order relation $\preceq$ defined on $\mathcal{I}$ (or, equivalently, for a fixed directed graph $G=(V, E)$) and the penalisation parameters $\lambda_{NI}$, $\lambda_{F}$ and $\lambda_{T}$. Then, the estimators $\hat{\bm{\beta}}^{NITF}(\bm{y}, \lambda_{NI}, \lambda_{T})$ and $\hat{\bm{\beta}}^{FLTF}(\bm{y}, \lambda_{F}, \lambda_{T})$ are related in the following way:
\begin{equation*}\label{}
\hat{\bm{\beta}}^{NITF}(\bm{y}, \lambda_{NI}, \lambda_{T}) = \hat{\bm{\beta}}^{FLTF}(\bm{y} - \frac{\lambda_{NI}}{2}D^{T}\bm{1}, \frac{\lambda_{NI}}{2}, \lambda_{T}).
\end{equation*} 

\end{theorem}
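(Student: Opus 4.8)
The plan is to exploit the two dual characterisations already established in Theorem~\ref{sol_ftf}. Both $\hat{\bm{\beta}}^{NITF}$ and $\hat{\bm{\beta}}^{FLTF}$ are written as $\bm{y}' - D^{T}\hat{\bm{\mu}} - \Delta^{T}\hat{\bm{\nu}}$, where the pair $(\hat{\bm{\mu}},\hat{\bm{\nu}})$ minimises the \emph{same} quadratic $\frac{1}{2}\|\bm{y}'-D^{T}\bm{\mu}-\Delta^{T}\bm{\nu}\|_{2}^{2}$ over a box constraint on $\bm{\mu}$ (and the identical constraint $-\lambda_{T}\bm{1}\le\bm{\nu}\le\lambda_{T}\bm{1}$ in both). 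For $\hat{\bm{\beta}}^{NITF}(\bm{y},\lambda_{NI},\lambda_{T})$ the $\bm{\mu}$-box is $[0,\lambda_{NI}]^{m}$, while for $\hat{\bm{\beta}}^{FLTF}(\bm{y}-\tfrac{\lambda_{NI}}{2}D^{T}\bm{1},\tfrac{\lambda_{NI}}{2},\lambda_{T})$ the $\bm{\mu}$-box is $[-\tfrac{\lambda_{NI}}{2},\tfrac{\lambda_{NI}}{2}]^{m}$. So the natural move is the change of variables $\bm{\mu} = \bm{\eta} + \tfrac{\lambda_{NI}}{2}\bm{1}$, which bijectively maps the symmetric box $[-\tfrac{\lambda_{NI}}{2},\tfrac{\lambda_{NI}}{2}]^{m}$ onto $[0,\lambda_{NI}]^{m}$.

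First I would substitute $\bm{\mu}=\bm{\eta}+\tfrac{\lambda_{NI}}{2}\bm{1}$ into the FLTF dual objective with data vector $\bm{y}-\tfrac{\lambda_{NI}}{2}D^{T}\bm{1}$. Then
\[
\Bigl(\bm{y}-\tfrac{\lambda_{NI}}{2}D^{T}\bm{1}\Bigr) - D^{T}\bm{\mu} - \Delta^{T}\bm{\nu}
= \bm{y} - D^{T}\bm{\eta} - \Delta^{T}\bm{\nu} - \lambda_{NI}D^{T}\bm{1},
\]
wait — I need to be careful: $D^{T}\bm{\mu}=D^{T}\bm{\eta}+\tfrac{\lambda_{NI}}{2}D^{T}\bm{1}$, so the $\tfrac{\lambda_{NI}}{2}D^{T}\bm{1}$ terms combine to $D^{T}\bm{1}$ with coefficient $\lambda_{NI}$, not what we want. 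Let me redo the bookkeeping: instead take the shift in the \emph{other} direction, $\bm{\eta}=\bm{\mu}-\tfrac{\lambda_{NI}}{2}\bm{1}$ applied to the NITF problem. Under this substitution the NITF data term $\bm{y}-D^{T}\bm{\mu}-\Delta^{T}\bm{\nu}$ becomes $(\bm{y}-\tfrac{\lambda_{NI}}{2}D^{T}\bm{1})-D^{T}\bm{\eta}-\Delta^{T}\bm{\nu}$, which is exactly the FLTF data term with the shifted input; meanwhile the constraint $0\le\bm{\mu}\le\lambda_{NI}\bm{1}$ becomes $-\tfrac{\lambda_{NI}}{2}\bm{1}\le\bm{\eta}\le\tfrac{\lambda_{NI}}{2}\bm{1}$, which is precisely the FLTF $\bm{\mu}$-constraint with $\lambda_{F}=\tfrac{\lambda_{NI}}{2}$; and the $\bm{\nu}$-constraint is untouched. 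Hence the NITF dual problem at $(\bm{y},\lambda_{NI},\lambda_{T})$ is, after this affine reparametrisation, literally the FLTF dual problem at $(\bm{y}-\tfrac{\lambda_{NI}}{2}D^{T}\bm{1},\tfrac{\lambda_{NI}}{2},\lambda_{T})$, so the optimal $\bm{\nu}$'s coincide and the optimal $\bm{\mu}$'s satisfy $\hat{\bm{\mu}}^{NITF}=\hat{\bm{\mu}}^{FLTF}+\tfrac{\lambda_{NI}}{2}\bm{1}$.

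Finally I would push this back through the primal reconstruction formula~(\ref{NITF_sol})--(\ref{FLTF_sol}). We get
\[
\hat{\bm{\beta}}^{NITF}(\bm{y},\lambda_{NI},\lambda_{T}) = \bm{y} - D^{T}\hat{\bm{\mu}}^{NITF} - \Delta^{T}\hat{\bm{\nu}}
= \bm{y} - D^{T}\bigl(\hat{\bm{\mu}}^{FLTF}+\tfrac{\lambda_{NI}}{2}\bm{1}\bigr) - \Delta^{T}\hat{\bm{\nu}},
\]
and grouping the $-\tfrac{\lambda_{NI}}{2}D^{T}\bm{1}$ term with $\bm{y}$ yields exactly $(\bm{y}-\tfrac{\lambda_{NI}}{2}D^{T}\bm{1}) - D^{T}\hat{\bm{\mu}}^{FLTF} - \Delta^{T}\hat{\bm{\nu}} = \hat{\bm{\beta}}^{FLTF}(\bm{y}-\tfrac{\lambda_{NI}}{2}D^{T}\bm{1},\tfrac{\lambda_{NI}}{2},\lambda_{T})$, as claimed.

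The one point that requires genuine (if minor) care is the \emph{uniqueness} needed to conclude that the two dual optima literally match: the dual objective $\frac{1}{2}\|\bm{y}'-D^{T}\bm{\mu}-\Delta^{T}\bm{\nu}\|_{2}^{2}$ need not be strictly convex in $(\bm{\mu},\bm{\nu})$ when $[D^{T}\ \Delta^{T}]$ has a nontrivial kernel, so the minimising pair may not be unique. This is not an obstacle, however: what is uniquely determined is the \emph{fitted value} $D^{T}\hat{\bm{\mu}}+\Delta^{T}\hat{\bm{\nu}}$ (the $\ell_2$-projection of $\bm{y}'$ onto the image of the constraint set), and that is all the primal formula~(\ref{NITF_sol}) depends on — so I would phrase the argument in terms of the common optimal value of $D^{T}\bm{\mu}+\Delta^{T}\bm{\nu}$ rather than of the pair itself, making the shift-of-variables bijection between the two feasible sets enough to finish. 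Everything else is the routine substitution above.
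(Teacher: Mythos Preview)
Your proposal is correct and follows essentially the same route as the paper: both invoke the dual characterisations from Theorem~\ref{sol_ftf} and apply the affine shift $\bm{\mu}^{*}=\bm{\mu}-\tfrac{\lambda_{NI}}{2}\bm{1}$ to convert the NITF dual box $[0,\lambda_{NI}]^{m}$ into the FLTF dual box $[-\tfrac{\lambda_{NI}}{2},\tfrac{\lambda_{NI}}{2}]^{m}$, then read off the primal identity. Your extra remark on non-uniqueness of the dual optimiser---arguing via the uniquely determined quantity $D^{T}\hat{\bm{\mu}}+\Delta^{T}\hat{\bm{\nu}}$---is a point the paper's proof passes over silently, so your version is in fact slightly more careful.
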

\begin{proof}
First, recall that 
\begin{equation*}\label{}
\hat{\bm{\beta}}^{NITF}(\bm{y}, \lambda_{NI}, \lambda_{T}) = \bm{y} - D^{T} \hat{\bm{\mu}} - \Delta^{T} \hat{\bm{\nu}},
\end{equation*} 
with
\begin{equation*}\label{}
\begin{bmatrix}
\hat{\bm{\mu}} \\
\hat{\bm{\nu}} 
\end{bmatrix}=
\underset{\bm{\mu}, \bm{\nu} }{\arg \min} \, \frac{1}{2}||\bm{y} - D^{T}\bm{\mu} - \Delta^{T} \bm{\nu}||_{2}^{2},
\end{equation*}
subject to
\begin{eqnarray*}
\bm{0} \leq& \bm{\mu} &\leq  \lambda_{NI}\bm{1},\\
-\lambda_{T} \bm{1} \leq& \bm{\nu} &\leq  \lambda_{T}\bm{1},\\
\end{eqnarray*}
and 
\begin{equation*}\label{}
\hat{\bm{\beta}}^{FLTF}(\bm{y}, \lambda_{F}, \lambda_{NI}) = \bm{y} - D^{T} \hat{\bm{\xi}} - \Delta^{T} \hat{\bm{\phi}},
\end{equation*} 
with
\begin{equation*}\label{}
\begin{bmatrix}
\hat{\bm{\xi}} \\
\hat{\bm{\phi}} 
\end{bmatrix}=
\underset{\bm{\xi}, \bm{\phi} }{\arg \min} \, \frac{1}{2}||\bm{y} - D^{T}\bm{\xi} - \Delta^{T} \bm{\phi}||_{2}^{2},
\end{equation*}
subject to
\begin{eqnarray*}
-\lambda_{F} \bm{1} \leq \bm{\xi} \leq  \lambda_{F}\bm{1},\\
-\lambda_{T} \bm{1} \leq \bm{\phi} \leq  \lambda_{T}\bm{1}.\\
\end{eqnarray*}

Next, we introduce the variable $\bm{\mu}^{*}  = \bm{\mu} - \frac{\lambda_{NI}}{2}\bm{1}$. Then, we can write for $\hat{\bm{\beta}}^{NITF}$ the following:
\begin{equation*}\label{}
\hat{\bm{\beta}}^{NITF}(\bm{y}, \lambda_{NI}, \lambda_{T}) = \bm{y} - \frac{\lambda_{NI}}{2}D^{T}\bm{1} - D^{T} \hat{\bm{\mu}}^{*} - \Delta^{T} \hat{\bm{\nu}},
\end{equation*} 
where 
\begin{equation*}\label{}
\begin{bmatrix}
\hat{\bm{\mu}}^{*} \\
\hat{\bm{\nu}} 
\end{bmatrix}=
\underset{\bm{\mu}, \bm{\nu} }{\arg \min} \, \frac{1}{2}||\bm{y}  - \frac{\lambda_{NI}}{2} - D^{T} \bm{\mu}^{*} - \Delta^{T} \bm{\nu}||_{2}^{2},
\end{equation*}
subject to
\begin{eqnarray*}
-\frac{\lambda_{NI}}{2}\bm{1} \leq& \bm{\mu}^{*} &\leq  \frac{\lambda_{NI}}{2}\bm{1},\\
-\lambda_{T} \bm{1} \leq& \bm{\nu} &\leq  \lambda_{T}\bm{1},\\
\end{eqnarray*}
which proves the statement of the theorem.
\end{proof}

In the previous theorem we have shown that the solution to the nearly-isotonic $\ell_{1}$ trend filter can be obtained via the solution to the fused lasso $\ell_{1}$ trend filter. Therefore, we can use the original ADMM algorithm in (\ref{ADMM})--(\ref{ADMM1}) to get the solution for the nearly-isotonic case by changing $\bm{y}$ to $\bm{y} - \frac{\lambda_{NI}}{2}$ and $\lambda_{F}$ to $\frac{\lambda_{NI}}{2}$.

Next, even though it is out of the scope of this paper, as a corollary we write the solution to the multi-penalty mixed trend filtering via the solution to the dual problem. Originally the solution to mixed trend filtering was obtained in Section 5.2 of \cite{ramdas2016fast} using ADMM algorithm.
\begin{lemma}\label{m_tf}
Assume we observe  a signal  $\bm{y} \in \mathbb{R}^{n}$ over the nodes of a given directed graph $G = (V, E)$. Next, let us consider the following optimisation problem:
\begin{equation*}\label{}
\hat{\bm{\beta}}^{MTF}(\bm{y}, \lambda_{F}) = {} 
\underset{\bm{\beta} \in \mathbb{R}^{n}}{\arg \min} \, \frac{1}{2} ||\bm{y} - \bm{\beta}||_{2}^{2} +  \sum_{i=1}^{k}\lambda_{i} ||\Delta_{i}\bm{\beta}||_{1} +  \sum_{j=1}^{m}\lambda^{\prime}_{j} ||\Delta^{\prime}_{j}\bm{\beta}||_{+},
\end{equation*}
where $\Delta_{i}$, for $i = 1, \dots, k$ and $\Delta^{\prime}_{j}$, for $j = 1, \dots, m$, are the matrices for the discrete operators of the trend filters of different orders on the graph $G$. Then the solution is given by
\begin{equation*}\label{}
\hat{\bm{\beta}}^{MTF} = \bm{y} - \sum_{i=1}^{k}\Delta_{i}^{T} \hat{\bm{\mu}}_{i}  - \sum_{j=1}^{m}\Delta_{j}^{\prime T} \hat{\bm{\nu}}_{j} 
\end{equation*} 
with
\begin{equation*}\label{}
\begin{bmatrix}
\hat{\bm{\mu}}_{1} \\
\vdots\\
\hat{\bm{\mu}}_{k} \\
\hat{\bm{\nu}}_{1} \\
\vdots\\
\hat{\bm{\nu}}_{m} 
\end{bmatrix}=
\underset{\bm{\mu}, \bm{\nu} }{\arg \min} \, \frac{1}{2}||\bm{y} - \sum_{i=1}^{k}\Delta_{i}^{T} \bm{\mu}_{i}  - \sum_{j=1}^{m}\Delta_{j}^{\prime T} \bm{\nu}_{j} ||_{2}^{2},
\end{equation*}
subject to
\begin{eqnarray*}
-\lambda_{1} \bm{1} \leq &\bm{\mu}_{1}& \leq  \lambda_{1}\bm{1},\\
&\vdots&\\
-\lambda_{k} \bm{1} \leq &\bm{\mu}_{k}& \leq  \lambda_{k}\bm{1},\\
\bm{0} \leq &\bm{\nu}_{1}& \leq  \lambda^{\prime}_{1}\bm{1},\\
&\vdots&\\
\bm{0} \leq &\bm{\nu}_{m}& \leq  \lambda^{\prime}_{m}\bm{1}.
\end{eqnarray*}
\end{lemma}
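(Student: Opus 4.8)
The plan is to follow exactly the same Lagrangian-dual derivation used in the proof of Theorem~\ref{sol_ftf}, only now with a finite collection of penalty terms instead of two. The key observation is that the objective is separable in the ``split'' variables, so the dual decouples term by term, and each term contributes one box-type constraint on its dual variable.

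\textbf{Setup and splitting.} First I would introduce split variables $\bm{t}_{i} = \Delta_{i}\bm{\beta}$ for $i=1,\dots,k$ and $\bm{s}_{j} = \Delta^{\prime}_{j}\bm{\beta}$ for $j=1,\dots,m$, rewriting the problem as
\begin{equation*}
\underset{\bm{\beta},\,\bm{t}_{i},\,\bm{s}_{j}}{\text{minimize}} \ \frac{1}{2}\|\bm{y}-\bm{\beta}\|_{2}^{2} + \sum_{i=1}^{k}\lambda_{i}\|\bm{t}_{i}\|_{1} + \sum_{j=1}^{m}\lambda^{\prime}_{j}\|\bm{s}_{j}\|_{+} \quad \text{s.t. } \bm{t}_{i}=\Delta_{i}\bm{\beta},\ \bm{s}_{j}=\Delta^{\prime}_{j}\bm{\beta}.
\end{equation*}
The Lagrangian, with dual variables $\bm{\mu}_{i}$ for the $\bm{t}_{i}$ constraints and $\bm{\nu}_{j}$ for the $\bm{s}_{j}$ constraints, is
\begin{equation*}
L = \frac{1}{2}\|\bm{y}-\bm{\beta}\|_{2}^{2} + \sum_{i}\lambda_{i}\|\bm{t}_{i}\|_{1} + \sum_{j}\lambda^{\prime}_{j}\|\bm{s}_{j}\|_{+} + \sum_{i}\bm{\mu}_{i}^{T}(\Delta_{i}\bm{\beta}-\bm{t}_{i}) + \sum_{j}\bm{\nu}_{j}^{T}(\Delta^{\prime}_{j}\bm{\beta}-\bm{s}_{j}).
\end{equation*}

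\textbf{Minimizing out the auxiliary variables.} Next I would minimize $L$ over $\bm{\beta}$, each $\bm{t}_{i}$, and each $\bm{s}_{j}$ separately, reusing the two elementary facts already established in the proof of Theorem~\ref{sol_ftf}: $\min_{\bm{t}}(\lambda\|\bm{t}\|_{1}-\bm{\mu}^{T}\bm{t})$ equals $0$ if $-\lambda\bm{1}\le\bm{\mu}\le\lambda\bm{1}$ and $-\infty$ otherwise, and $\min_{\bm{s}}(\lambda\|\bm{s}\|_{+}-\bm{\nu}^{T}\bm{s})$ equals $0$ if $\bm{0}\le\bm{\nu}\le\lambda\bm{1}$ and $-\infty$ otherwise. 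The $\bm{\beta}$-minimization, with gradient $\bm{\beta}-\bm{y}+\sum_{i}\Delta_{i}^{T}\bm{\mu}_{i}+\sum_{j}\Delta^{\prime T}_{j}\bm{\nu}_{j}=\bm{0}$, gives minimizer $\hat{\bm{\beta}} = \bm{y} - \sum_{i}\Delta_{i}^{T}\bm{\mu}_{i} - \sum_{j}\Delta^{\prime T}_{j}\bm{\nu}_{j}$ and, after substitution, yields $-\frac{1}{2}\|\bm{y}-\sum_{i}\Delta_{i}^{T}\bm{\mu}_{i}-\sum_{j}\Delta^{\prime T}_{j}\bm{\nu}_{j}\|_{2}^{2}+\frac{1}{2}\bm{y}^{T}\bm{y}$, exactly as in the two-term case. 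Assembling these, the dual function is finite only on the box $\{-\lambda_{i}\bm{1}\le\bm{\mu}_{i}\le\lambda_{i}\bm{1}\}\cap\{\bm{0}\le\bm{\nu}_{j}\le\lambda^{\prime}_{j}\bm{1}\}$, where it equals $-\frac12\|\bm{y}-\sum_i\Delta_i^T\bm{\mu}_i-\sum_j\Delta^{\prime T}_j\bm{\nu}_j\|_2^2+\frac12\bm{y}^T\bm{y}$; maximizing the dual is therefore the stated constrained least-squares problem, and the primal solution is recovered from the stationarity equation for $\bm{\beta}$.

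\textbf{Main obstacle.} There is no real obstacle — the argument is a routine bookkeeping extension of Theorem~\ref{sol_ftf}, and strong duality holds because the objective is convex with only affine equality constraints (Slater's condition is trivial). The only point requiring a line of care is confirming that the decoupling is legitimate, i.e.\ that $\min_{\bm{t}_1,\dots,\bm{t}_k,\bm{s}_1,\dots,\bm{s}_m}$ of a sum of terms each depending on a single block equals the sum of the blockwise minima; this is immediate but worth stating so the box constraints are seen to arise independently for each penalty. I would close by noting the sign asymmetry: the $\|\cdot\|_{1}$ penalties (fused-lasso / trend-filter type) produce symmetric boxes $[-\lambda_i,\lambda_i]$, while the $\|\cdot\|_{+}$ penalties (nearly-isotonic type) produce one-sided boxes $[0,\lambda^{\prime}_j]$, which matches the pattern of Theorem~\ref{sol_ftf}.
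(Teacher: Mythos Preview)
Your proposal is correct and follows exactly the approach the paper itself indicates: the paper does not give a separate proof of this lemma but simply states that it is ``virtually the same as for the Theorem~\ref{sol_ftf}.'' Your splitting into auxiliary variables, term-by-term minimization using the two elementary conjugate facts, and recovery of the primal from the stationarity condition in $\bm{\beta}$ reproduce precisely that argument with the obvious bookkeeping extension to $k+m$ penalty blocks.
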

The proof of the previous lemma is virtually the same as for the Theorem \ref{sol_ftf}.

Next, we prove that both nearly-isotonic and fused lasso $\ell_{1}$ trend filters for both general and Kronecker cases preserve the sum of components of the original signal vector. This important property also holds for isotonic regression over a general partial order set.
\begin{theorem}\label{sum_com}
Assume we observe  a signal  $\bm{y} \in \mathbb{R}^{n}$ over the nodes of a given directed graph $G = (V, E)$. Then for any penalisation parameters $\lambda_{NI}$, $\lambda_{F}$ and $\lambda_{T}$ we have 
\begin{equation*}
\sum_{i=1}^{n} y_{i} = \sum_{i=1}^{n}\hat{\beta}^{NITF}_{i} = \sum_{i=1}^{n}\hat{\beta}^{FLTF}_{i}.
\end{equation*}
\end{theorem}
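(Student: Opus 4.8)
The plan is to exploit the closed-form dual representation of both estimators established in Theorem~\ref{sol_ftf}, namely $\hat{\bm{\beta}} = \bm{y} - D^{T}\hat{\bm{\mu}} - \Delta^{T}\hat{\bm{\nu}}$ (with $\Delta = L$ or $\Delta = K$), and simply to left-multiply this identity by the all-ones row vector $\bm{1}^{T}$. Since $\bm{1}^{T}D^{T} = (D\bm{1})^{T}$ and $\bm{1}^{T}\Delta^{T} = (\Delta\bm{1})^{T}$, the whole statement reduces to showing that $\bm{1}$ lies in the kernel of both $D$ and $\Delta$; this is a statement about the matrices alone, so it holds irrespective of the optimal dual variables $\hat{\bm{\mu}},\hat{\bm{\nu}}$ and hence for arbitrary $\lambda_{NI}$, $\lambda_{F}$, $\lambda_{T}$.

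First I would check that $D\bm{1} = \bm{0}$. By the definition (\ref{grlbls}) of the oriented incidence matrix, each row of $D$ contains exactly one $+1$ (the source of the corresponding edge), one $-1$ (the target), and zeros otherwise, so every row sums to zero; this gives $D\bm{1} = \bm{0}$ directly.

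Next I would verify $\Delta\bm{1} = \bm{0}$ in both cases. For general trend filtering, $\Delta = L = D^{T}D$, so $L\bm{1} = D^{T}(D\bm{1}) = \bm{0}$ by the previous step. For Kronecker trend filtering, $\Delta = K$ with $K$ as in (\ref{Kmtrx}); here I would use that the all-ones vector on the $k$-dimensional grid factors as $\bm{1} = \bm{1}_{n}\otimes\cdots\otimes\bm{1}_{n}$, and that each block row of $K$ is a Kronecker product of $(k-1)$ copies of $I_{n}$ and one copy of $D^{t}$. By the mixed-product property of Kronecker products, applying such a block row to $\bm{1}$ yields a Kronecker product whose factor in the slot occupied by $D^{t}$ is $D^{t}\bm{1}_{n}$; since each row of $D^{t}$ in (\ref{Mtr1d}) has the pattern $(\dots,1,-2,1,\dots)$ and therefore sums to zero, $D^{t}\bm{1}_{n} = \bm{0}$, so every block of $K\bm{1}$ vanishes and $K\bm{1} = \bm{0}$.

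Combining these, $\sum_{i=1}^{n}\hat{\beta}_{i} = \bm{1}^{T}\hat{\bm{\beta}} = \bm{1}^{T}\bm{y} - (D\bm{1})^{T}\hat{\bm{\mu}} - (\Delta\bm{1})^{T}\hat{\bm{\nu}} = \bm{1}^{T}\bm{y}$ for both $\hat{\bm{\beta}}^{NITF}$ and $\hat{\bm{\beta}}^{FLTF}$. I do not expect a real obstacle; the only step needing a little care is the Kronecker case, where the factorization of the grid all-ones vector and the mixed-product rule should be invoked rather than a brute-force entrywise computation of $K\bm{1}$. The parenthetical remark about isotonic regression over a general partial order set can be treated identically, since its KKT conditions yield a representation $\hat{\bm{\beta}}^{I} = \bm{y} - D^{T}\hat{\bm{\mu}}$ with the same incidence matrix $D$, or else simply cited.
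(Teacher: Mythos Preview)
Your proposal is correct and follows essentially the same approach as the paper: both use the dual representation $\hat{\bm{\beta}} = \bm{y} - D^{T}\hat{\bm{\mu}} - \Delta^{T}\hat{\bm{\nu}}$ from Theorem~\ref{sol_ftf} and then argue that the correction terms have zero coordinate sum because the rows of $D$ and $\Delta$ each sum to zero (equivalently, $D\bm{1}=\Delta\bm{1}=\bm{0}$). The only cosmetic difference is that for the Kronecker case the paper simply observes the $(1,-2,1)$ row pattern of $K$, whereas you invoke the mixed-product property of Kronecker products; both arrive at the same conclusion.
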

\begin{proof}
Recall that 
\begin{equation*}\label{}
\hat{\bm{\beta}}^{NITF}(\bm{y}, \lambda_{NI}, \lambda_{T}) = \bm{y} - D^{T} \hat{\bm{\mu}} - \Delta^{T} \hat{\bm{\nu}},
\end{equation*} 
and 
\begin{equation*}\label{}
\hat{\bm{\beta}}^{FLTF}(\bm{y}, \lambda_{F}, \lambda_{NI}) = \bm{y} - D^{T} \hat{\bm{\xi}} - \Delta^{T} \hat{\bm{\phi}}.
\end{equation*} 

Next, note that for any matrix $A\in\mathbb{R}^{k\times m}$ if the sum of the elements of $A$ along every column is zero, then for any $\bm{x} \in \mathbb{R}^{m}$ we have $\sum_{j=1}^{k} [A\bm{x}]_{j} = 0$.

Further, from (\ref{grlbls}) it follows that sum of the elements of $D^{T}$ along every column is zero. Further, in case of genaral filtering, i.e. if $\Delta = L$, the same property holds because $L$ is Laplacian of a directed graph.

Finally, if $\Delta = K$, i.e. Kronecker filtering, this property holds, because $K$ is the matrix with each row having $1$, $-2$, $1$, and the rest values are zeros.

Therefore, we have proved that   
\begin{equation*}
\sum_{i}[D^{T} \hat{\bm{\mu}}]_{i} = \sum_{i}[D^{T} \hat{\bm{\xi}}]_{i} = \sum_{i}[\Delta^{T} \hat{\bm{\nu}}]_{i} = \sum_{i}[\Delta^{T} \hat{\bm{\phi}}]_{i} =0,
\end{equation*}
which finishes the proof of the statement of the theorem.
\end{proof}
This property is important, because it ensures that, for example, these fused trend filters can be used for smoothing of histograms. Also, for example in the applications in physics, if one needs to smooth intensity plot then these filters preserve the total energy.

\section{Computational aspects, simulation study and application to the real data}\label{compest}
First, we address the computational aspects of the estimators. Similarly to \citet{meyer2000degrees, minami2020estimating, pastukhov2022fused},  to test the performance of the estimators we use the following bisigmoid and bicubic functions:
\begin{eqnarray*}
f_{bs}(x^{(1)}, x^{(2)}) &=& \frac{1}{2}\Big(\frac{e^{16x^{(1)} - 8}}{1 + e^{16x^{(1)} - 8}} + \frac{e^{16x^{(2)} - 8}}{1 + e^{16x^{(2)} - 8}}\Big),\\
f_{bc}(x^{(1)}, x^{(2)}) &=& \frac{1}{2}\Big( (2x^{(1)} -1)^{3} + (2x^{(2)} -1)^{3} \Big) + 2,
\end{eqnarray*}
with $x^{(1)} \in [0,1)$ and $x^{(2)} \in [0,1)$, corrupted with i.i.d. noise $\mathcal{N}(0, 0.25)$. Next, we generate homogeneous grid $d\times d$:
\begin{equation*}\label{}
x^{(1)}_{k} = \frac{k-1}{d} \quad \text{and} \quad x^{(2)}_{k} = \frac{k-1}{d},
\end{equation*}
for $k = 1, \dots, d$. The size of the side $d$ varies in $\{ 1\times10^{2}, 2\times 10^{2}, 3\times 10^{2}, 4\times 10^{2},  5\times10^{2} \}$. 

Further, in order to get the numerical solutions to the problems in (\ref{NITFG}) and (\ref{FLTFL}), first, we use two versions of ADMM, one with conjugate gradient solution to the step in (\ref{ADMMinv}) and another with Cholesky decomposition. Also, we obtain the solution by solving the dual problems (\ref{u_solni}) and (\ref{u_solfl}) to  (\ref{NITFG}) and (\ref{FLTFL}), respectively, using recently developed OSQP algorithm, cf. \citet{stellato2020osqp}. We test the performance of general trend filtering and Kronecker filter separately.

Next, we uniformly generate penalisation parameters $\lambda_{F}$ and $\lambda_{T}$ from $U(0,20)$, perform 10 runs and compute average times for each size of the square grid $d$. For ADMM and OSQP algorithms we set $\varepsilon_{abs} = \varepsilon_{rel} = 10^{-3}$ (for the details of the settings in original ADMM we refer to \cite{boyd2011distributed, wahlberg2012admm} and for OSQP we refer to \citet{stellato2020osqp}). 

Figure \ref{compt} below provides these computational times. In the Figure \ref{compt} we use the following abbreviations:
\begin{itemize}
\item FGTF OSQP means fused lasso general trend filtering with the solution obtained by OSQP;
\item FKTF OSQP means, consequently, fused lasso Kronecker trend filtering solved by OSQP;
\item FGTF ADMM CG stands for solution of fused lasso general trend filtering by ADMM with the solution to (\ref{ADMMinv}) by conjugate gradient iteration;
\item FKTF ADMM CG means, consequently, solution of  fused lasso Kronecker trend filtering  by ADMM with the solution to (\ref{ADMMinv}) by conjugate gradient iteration;
\item FGTF ADMM CH refers to solution of fused lasso general trend filtering by ADMM with the solution to (\ref{ADMMinv} with Cholesky decomposition;
\item FKTF ADMM CH stands for fused lasso Kronecker trend filtering solved by ADMM with Cholesky decomposition.
\end{itemize}

All the computations were performed on MacBook Air (Apple M1 chip), 16 GB RAM. From these results one can see that the estimators are computationally feasible for moderate sized data sets. We can also conclude that OSQP is noticeably faster than both versions of ADMM algorithm and the computational performance is almost the same for both versions of trend filters. 

\begin{figure}[!ht] 
  \begin{subfigure}{7cm}
    \centering\includegraphics[scale=0.48]{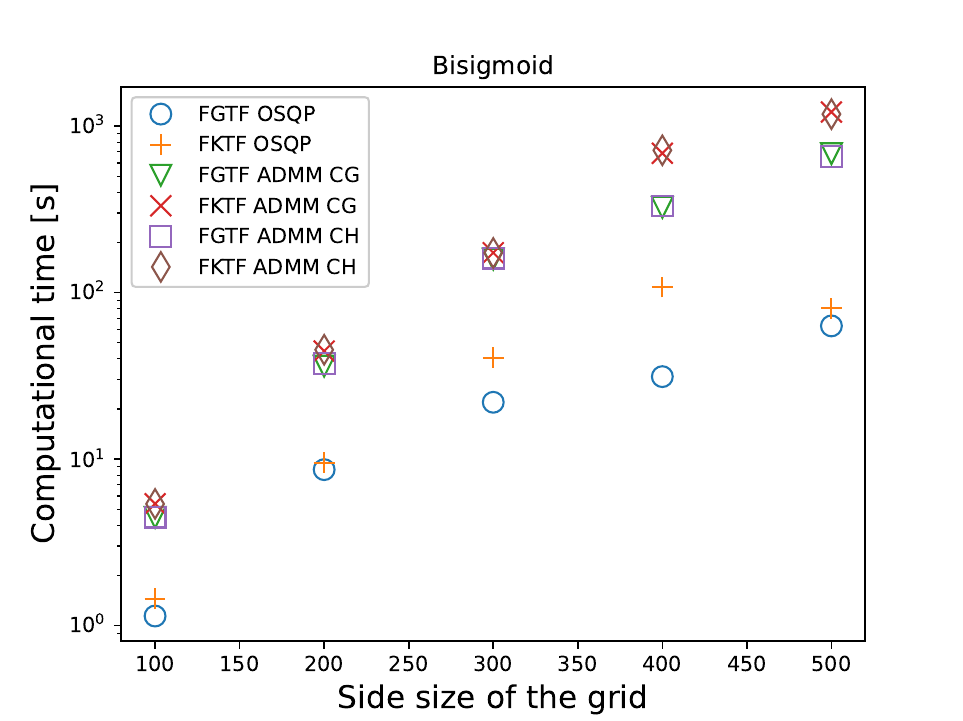}
  \end{subfigure}
  \begin{subfigure}{7cm}
    \centering\includegraphics[scale=0.48]{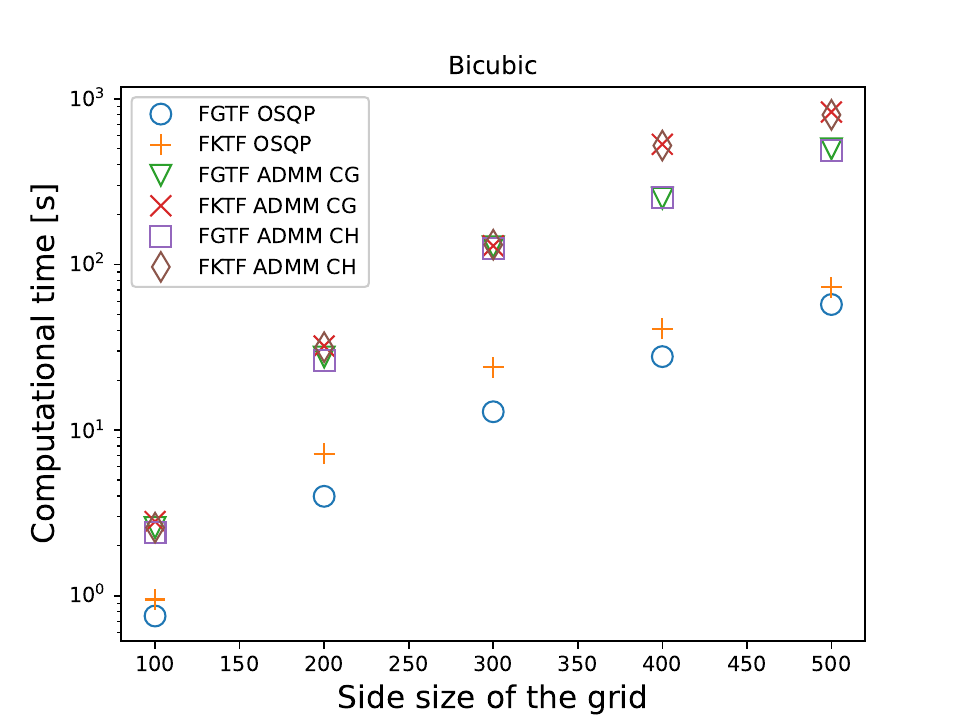}
  \end{subfigure}
   \caption{Computational times vs side size of a square grid for ADMM and OSQP algorithms for fused lasso trend filtering for the cases of general trend filtering and Kronecker trend filtering in two dimensional grid.}\label{compt}
\end{figure}

Next, let us compare general trend filter and Kronecker trend filter without fusion term.  Let us consider the following signal, which is linear in each coordinate and corrupted with some  i.i.d. noise $\mathcal{N}(0, 0.25)$:
\begin{equation*}
y(x^{(1)}, x^{(2)}) = x^{(1)} + x^{(2)} + \varepsilon,
\end{equation*}
measured over homogeneous grid $d\times d$:
\begin{equation*}\label{}
x^{(1)}_{k} = \frac{k-1}{d} \quad \text{and} \quad x^{(2)}_{k} = \frac{k-1}{d}.
\end{equation*}
%where $\varepsilon\in\mathcal{N}(\bm{0}, \sigma^{2}\bm{I})$

\begin{figure}[!htb] 
  \begin{subfigure}{6.2cm}
    \centering\includegraphics[scale=0.4]{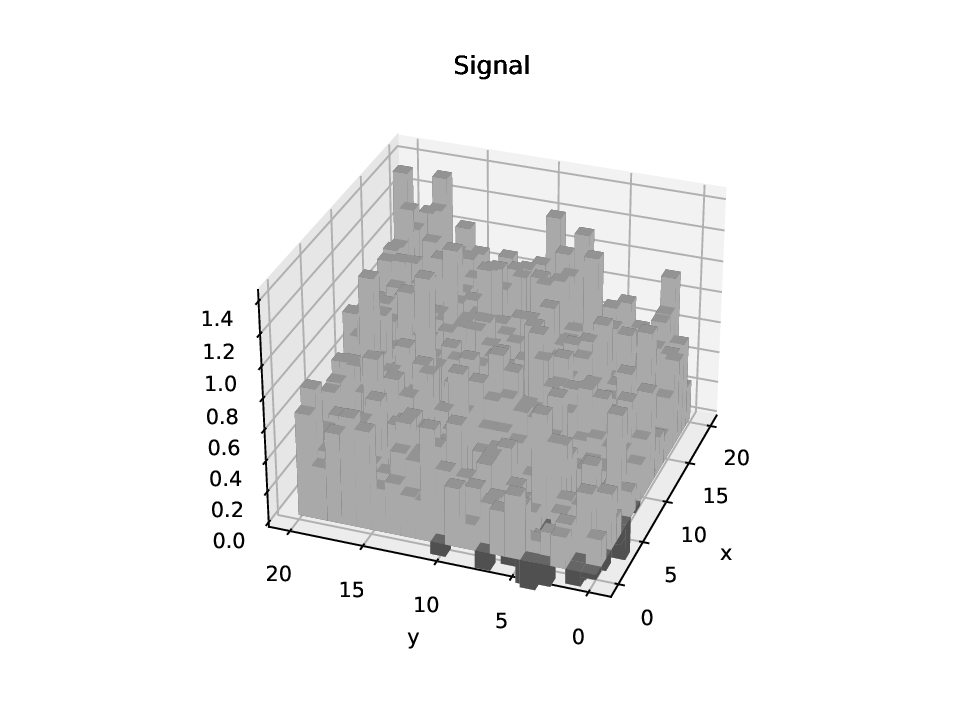}
  \end{subfigure}

  \begin{subfigure}{6.2cm}
    \centering\includegraphics[scale=0.4]{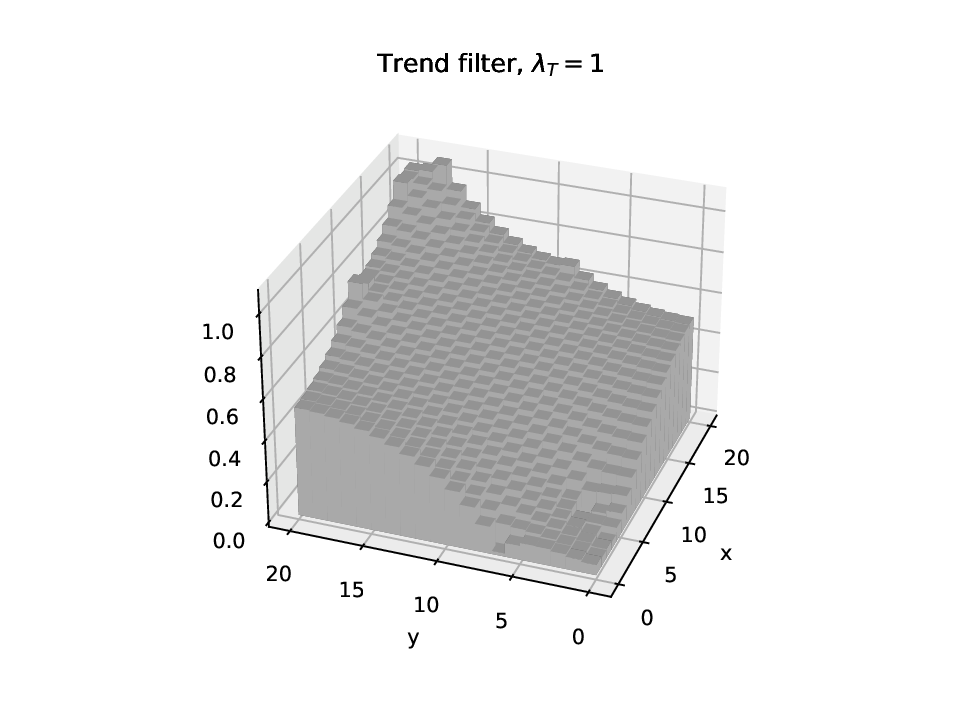}
  \end{subfigure}
  \begin{subfigure}{6.2cm}
    \centering\includegraphics[scale=0.4]{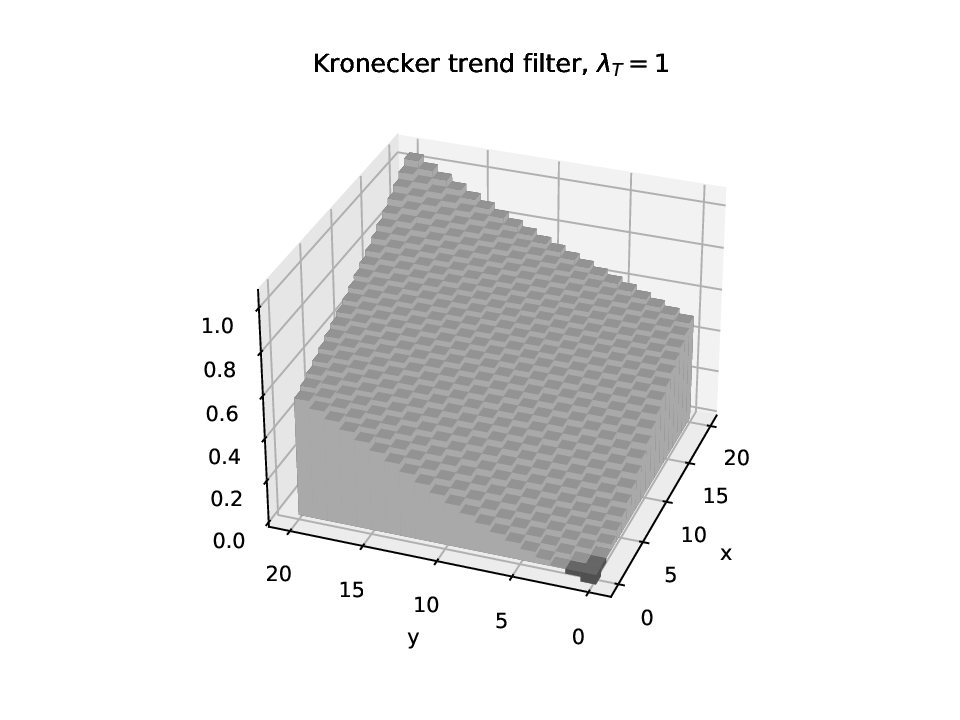}
  \end{subfigure}  
  
   \begin{subfigure}{6.2cm}
    \centering\includegraphics[scale=0.4]{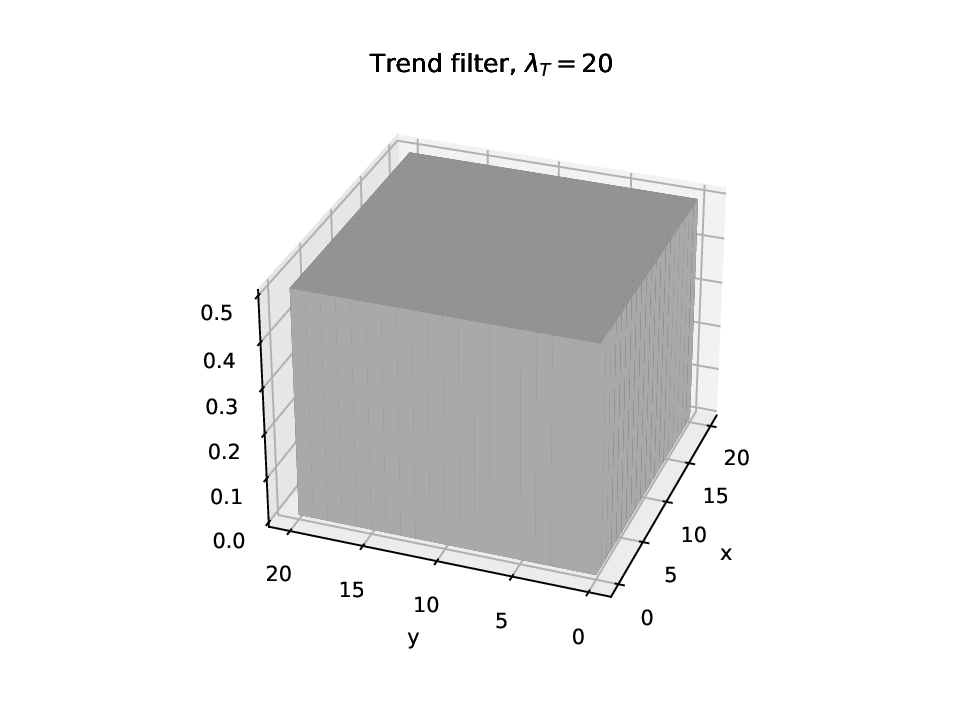}
  \end{subfigure}
  \begin{subfigure}{6.2cm}
    \centering\includegraphics[scale=0.4]{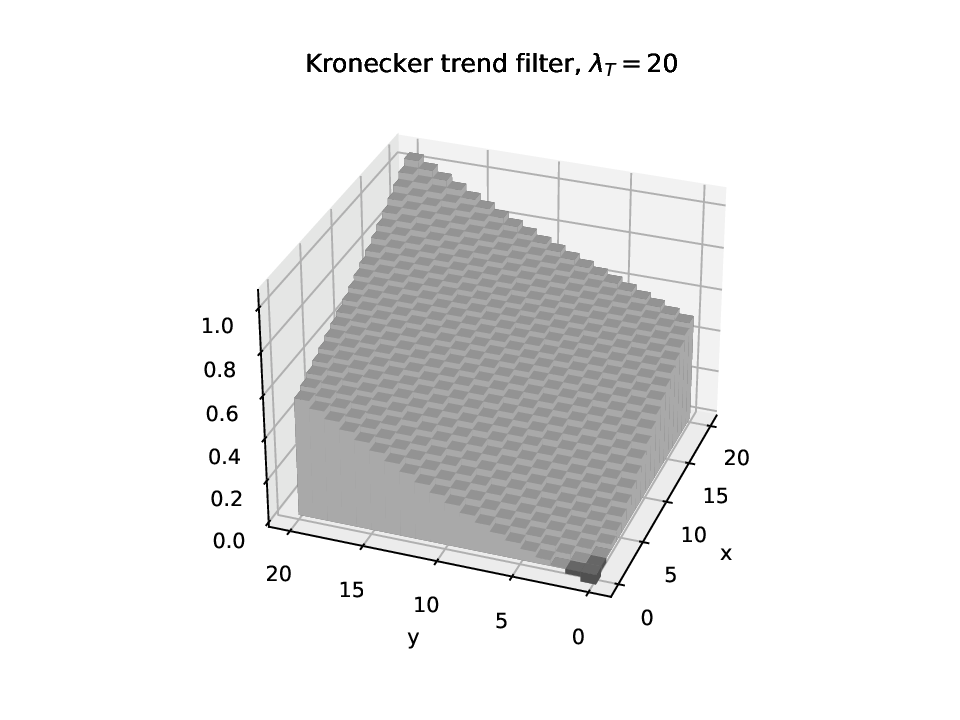}
  \end{subfigure}  
   \caption{Trend filter and Kronecker trend filter for different values of parameter $\lambda_{T}$.}\label{trftrs}
\end{figure} 

From Figure \ref{trftrs} one can see that the general trend filter and Kronecker trend filter behave quite differently. Similarly to $\ell_{1}$ trend filter in one dimension, cf. \cite{kim2009ell_1}, as penalisation paramerter becomes larger, the Kronecker filter tends to the best $\ell_{2}$ linear appromaximation, while the general trend filter tends to the sample average of the whole signal $\bm{y}$ as fused lasso does.

Further, let us consider nearly-isotonic trend filtering. In the paper \cite{pastukhov2022fused} the author studied the combination of nearly-isotonic regression and fused lasso. We consider noisy bisigmoid signal defined above and apply trend filters to the noisy signals.

Figure \ref{nisoktf} displays the signal, isotonic regression and nearly-isotonic trend filtering for different values of penalisation parameters. One can see that as $\lambda_{T}$ becomes larger Kronecker trend filtering makes monotonic approximation more linear. Therefore, nearly-isotononic trend filtering provides a trade-off between monotonic approximation and best linear fit, and it can be used as an alternative approach to the strictly isotonic regression, cf. \cite{wright1978estimating}.

\begin{figure}[!htb] 
  \begin{subfigure}{6.2cm}
    \centering\includegraphics[scale=0.4]{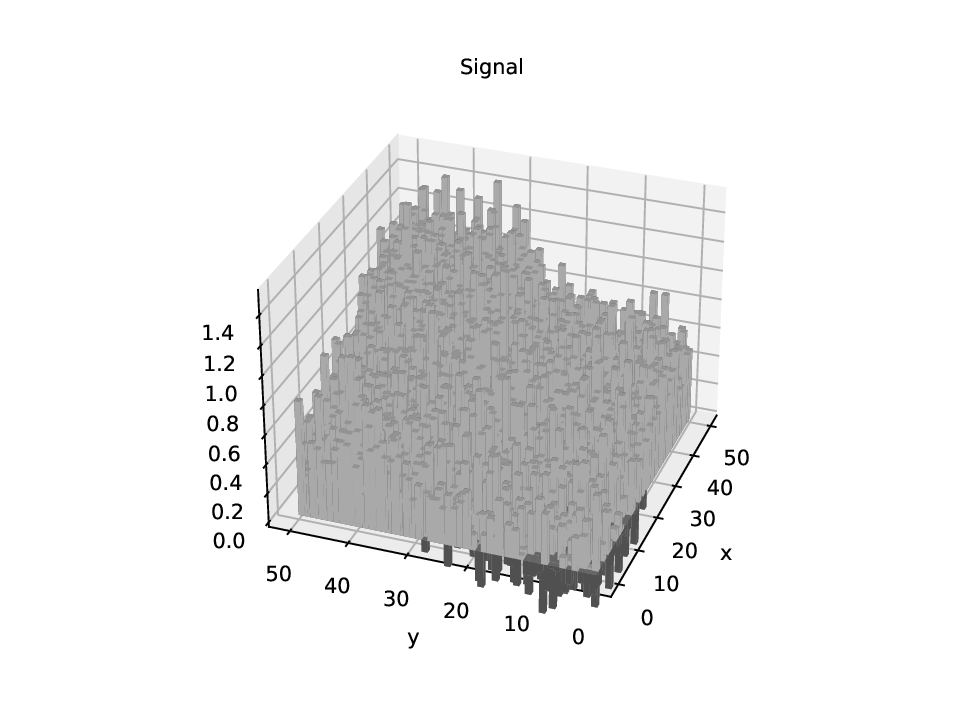}
  \end{subfigure}
  \begin{subfigure}{6.2cm}
    \centering\includegraphics[scale=0.4]{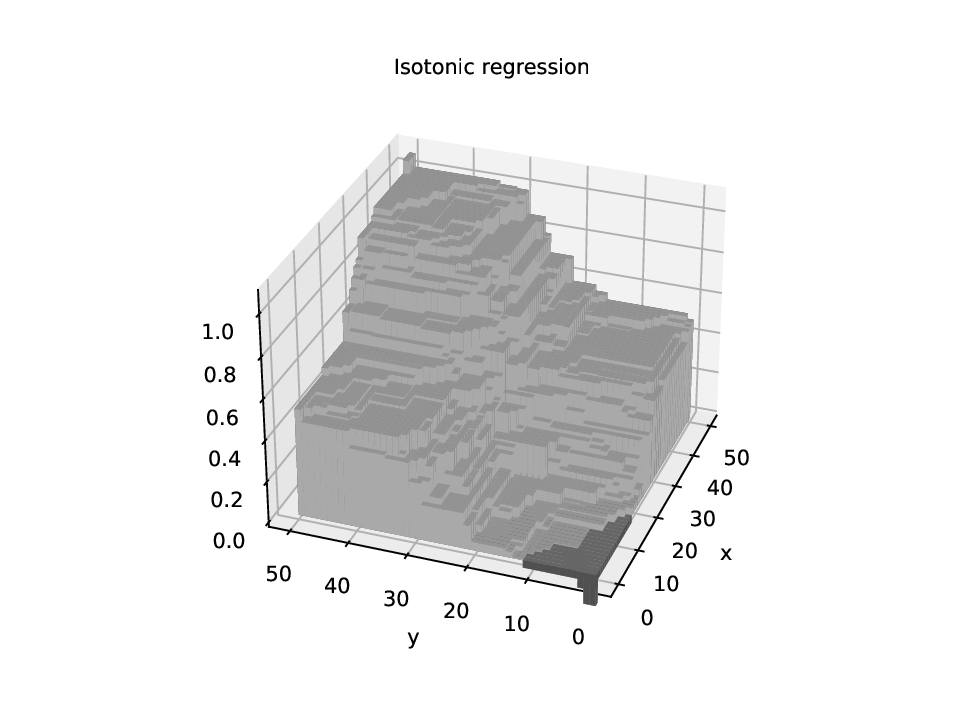}
  \end{subfigure}  
  
   \begin{subfigure}{6.2cm}
    \centering\includegraphics[scale=0.4]{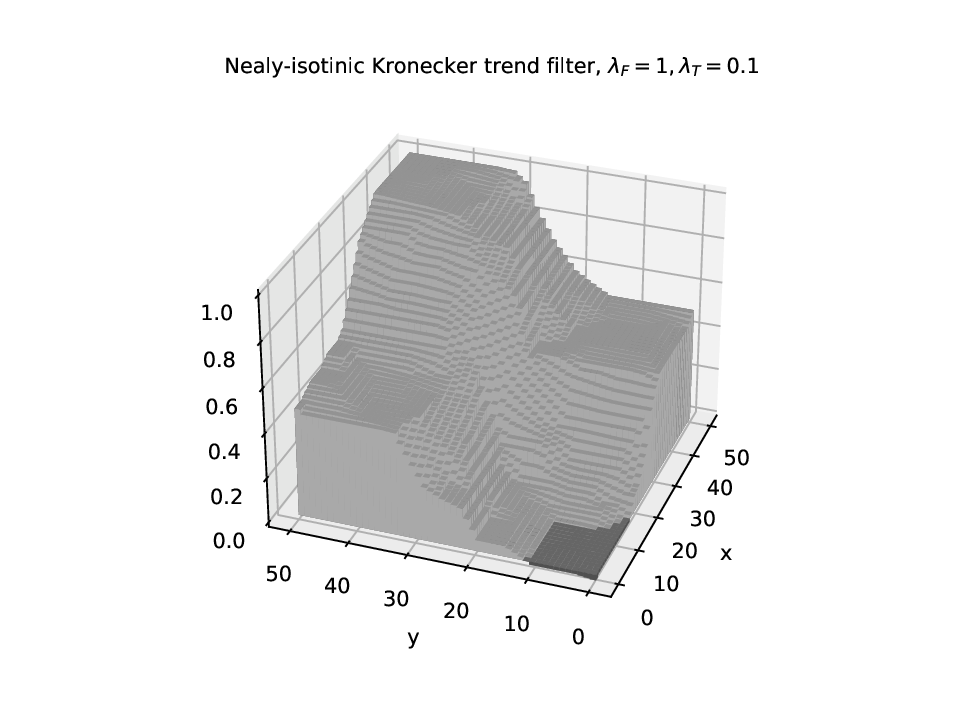}
  \end{subfigure}
  \begin{subfigure}{6.2cm}
    \centering\includegraphics[scale=0.4]{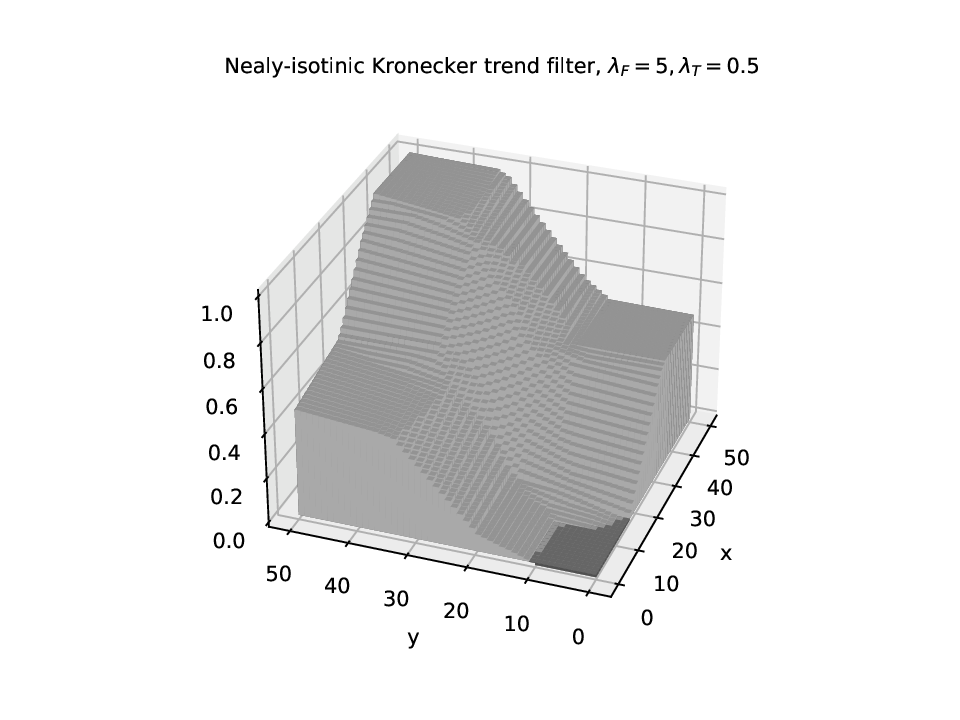}
  \end{subfigure}  
   \caption{Isotonic regression and nearly-isotonic Kronecker trend filter for different values of parameter $\lambda_{NI}$ and $\lambda_{T}$.}\label{nisoktf}
\end{figure} 

Next, we consider the problem of image denoising and inpainting. Figure \ref{chess} presents the original picture of a chessboard, its version without some sets of pixels (white lines) and corrupted with some Normally distributed noise, and the filtered images processed by fused lasso estimator, general trend filter, Kronecker trend filter, fused lasso trend filter and fused lasso Kronecker trend filter.

From Figure \ref{chess} one can see that fussed lasso removes noise quite effectively, though it does not fix missing pixels. On the other hand, both trend filters do impainting quite well, but do not remove noise properly. At the same time, the combination of the fusion and trend filtering can take the best from fusion and filtering, which can be considered as the advantage of the methods studied in this paper.

\begin{figure}[!htb] 
      \begin{subfigure}{6.2cm}
    \centering\includegraphics[scale=0.75]{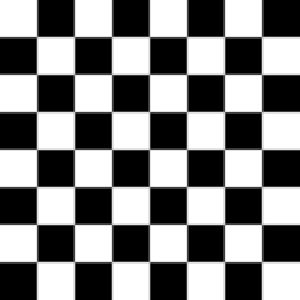}
     \caption{Original image}
  \end{subfigure}
  \begin{subfigure}{6.2cm}
    \centering\includegraphics[scale=0.75]{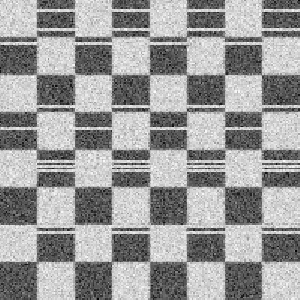}
         \caption{Noizy image}
  \end{subfigure}  
  
%   \caption{The images and the noizy versions}\label{figs}
%\end{figure} 

%\begin{figure}[!htb]   
   \begin{subfigure}{6.2cm}
    \centering\includegraphics[scale=0.75]{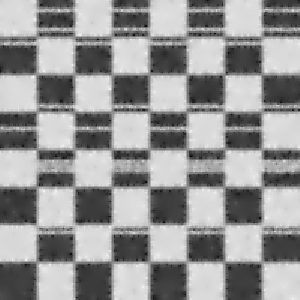}
         \caption{General trend filter for $\lambda_{T} = 50$}
  \end{subfigure}  
     \begin{subfigure}{6.2cm}
    \centering\includegraphics[scale=0.75]{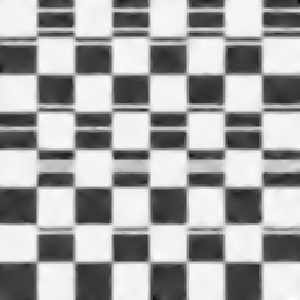}
         \caption{Kronecker trend filter for $\lambda_{T} = 50$}
  \end{subfigure}
  
     \begin{subfigure}{6.2cm}
    \centering\includegraphics[scale=0.75]{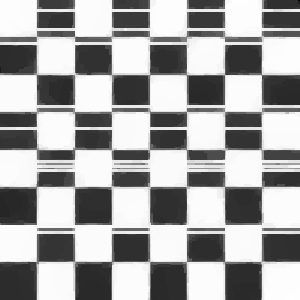}
         \caption{Fused lasso estimator for $\lambda_{F} = 50$}
  \end{subfigure}
  
    \begin{subfigure}{6.2cm}
    \centering\includegraphics[scale=0.75]{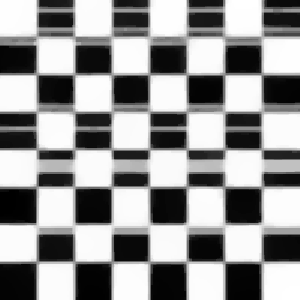}
         \caption{Fused general trend filter for $\lambda_{F} = 50$, $\lambda_{T} = 50$.}
  \end{subfigure}     
     \begin{subfigure}{6.2cm}
    \centering\includegraphics[scale=0.75]{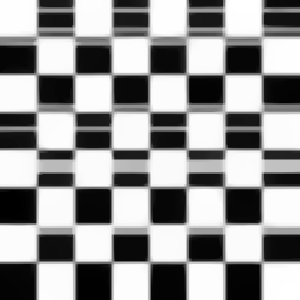}
         \caption{Fused Kronecker trend filter for $\lambda_{F} = 50$, $\lambda_{T} = 50$}
  \end{subfigure}
   \caption{Original image, image with missing pixels and noise, and different image filters.}\label{chess}
\end{figure} 

\newpage

\section{ Conclusion and discussion}\label{disc}
In this paper, we proposed mixing of fused lasso and nearly isotonic regression with general graph filtering and Kronecker filter, which is a useful generalisation of both fusion and trend filtering for estimation of the signal over the nodes of a general directed graph. We provided a computationally feasible approach for big data sets. 

On the simulated data we have shown that nearly-isotonic Kronecker trend filter is potentially interesting and useful approach to make isotonic regression to be strictly monotonic. Also, we gave the example how the combination of fused lasso and trend filtering can improve image denoising and inpainting.

One of the possible interesting direction for the future research is to study fused trend filtering in the case when $\lambda_{F}$ and $\lambda_{T}$ are not the same for every node in the graph. This is important, for example, in the case of sampling over non-homogeneous grid.  Next, analogously to the fused lasso and nearly-isotonic regression, it is important to obtain the estimator of the degrees of freedom for the estimators introduced in this paper.

\newpage
%%%%%%%%%%%%%%%%%%%%%%%%%%%%%%%%%%%%%%%%%%%%%%
%% Example with single Appendix:            %%
%%%%%%%%%%%%%%%%%%%%%%%%%%%%%%%%%%%%%%%%%%%%%%
%\begin{appendix}
%\section*{Title}\label{appn} %% if no title is needed, leave empty \section*{}.
%Appendices should be provided in \verb|{appendix}| environment,
%before Acknowledgements.

%If there is only one appendix,
%then please refer to it in text as \ldots\ in the \hyperref[appn]{Appendix}.
%\end{appendix}
%%%%%%%%%%%%%%%%%%%%%%%%%%%%%%%%%%%%%%%%%%%%%%
%% Example with multiple Appendixes:        %%
%%%%%%%%%%%%%%%%%%%%%%%%%%%%%%%%%%%%%%%%%%%%%%
\begin{appendix}
\section{Examples of partial order relation and matrices for the graphs}\label{appA}
In order to clarify notation introduced in the Introduction, let us consider two examples. First assume that $G$ is the following chain graph displayed at Figure~\ref{1dftfgr}.
\begin{figure}[!h]
\begin{center}
\includegraphics[width=3in]{}\\
\end{center}
\caption{Graph $G=(V,E)$ which corresponds to the chain graph. \label{1dftfgr}}
\end{figure}
Next, let $\mathcal{I} = \{1, \dots, n\}$, and for $j_{1} \in \mathcal{I}$ and $j_{2} \in \mathcal{I}$  we naturally define $j_{1}\preceq j_{2}$ if $j_{1} \leq j_{2}$. Further, if we let $V = \mathcal{I}$ and $E = \{(i, i+1): i = 1, \dots, n-1 \}$, with $n=5$, then $G = (V, E)$ on Figure~\ref{1dftfgr} is the directed graph which correspond to the one dimensional order relation on $\mathcal{I}$. The incidence matrix $D$ of $G$ is given in (\ref{incm1d}). Also, note that $D = D^{f}$, defined in (\ref{Df1d}).

\begin{equation}\label{incm1d}
D = 
\begin{pmatrix}
1 & -1 & 0 & 0 & 0 \\
0 & 1 & -1 & 0 & 0\\
0 & 0 & 1 & -1 & 0 \\
0 & 0 & 0 &  1 & -1
\end{pmatrix}
\end{equation}

Next, we consider two dimensional case with bimonotonic constraints. The notion of bimonotonicity was first introduced in \citet{beran2010least} and it means the following. Let us consider the index set
\begin{eqnarray*}
\mathcal{I} = \{ \bm{i}= (i^{(1)} ,i^{(2)}): \, i^{(1)}=1,2,\dots, n_{1}, \,  i^{(2)}=1,2,\dots, n_{2}\}
\end{eqnarray*}
with the following order relation $\preceq$ on it: for $\bm{j}_{1}, \bm{j}_{2}\in \mathcal{I}$ we have $\bm{j}_{1} \preceq \bm{j}_{2}$ iff $j^{(1)}_{1} \leq j^{(1)}_{2}$ and $j^{(2)}_{1} \leq j^{(2)}_{2}$. Then, a vector $\bm{\beta}\in\mathbb{R}^{n}$, with $n=n_{1}n_{2}$, indexed by $\mathcal{I}$ is called bimonotone if it is isotonic with respect to bimonotone order $\preceq$ defined on its index $\mathcal{I}$. Further, we define the directed graph $G = (V, E)$ with vertexes $V = \mathcal{I}$, and the edges
\begin{eqnarray*}
\begin{aligned}
E ={}&  \{((l, k),(l, k+1) ): \, 1 \leq l \leq n_{1}, 1 \leq k \leq n_{2} - 1\}\\
\cup \,& \{((l, k),(l+1, k) ): \, 1 \leq l \leq n_{1}-1, 1 \leq k \leq n_{2} \}.
\end{aligned}
\end{eqnarray*}

The oriented graph for $3\times 4$ grid is displayed on Figure \ref{2dftfgr}. The oriented incidence matrix $D \in \mathbb{R}^{17\times 12}$ corresponding to the graph $G=(V,E)$, displayed in Figure (\ref{2dftfgr}), is given in (\ref{2dDmat}), its Laplacian matrix $L = D^{T}D$ is given in (\ref{2dLmat}) and matrix $K$ for Kronecker trend filtering is given in (\ref{2dKmat}).

\begin{figure}[h!]
\begin{center}
\includegraphics[width=3in]{}\\
\end{center}
\caption{Graph $G=(V,E)$ which corresponds to the two dimensional equally spaced grid. \label{2dftfgr}}
\end{figure}

\begin{equation}\label{2dDmat}
D = \begin{pmatrix}
1 & -1 & 0 & 0 & 0 &0 & 0 & 0 & 0& 0 & 0& 0\\
0 & 1 & -1 & 0 & 0 &0 & 0 & 0 & 0& 0 & 0& 0\\
0 & 0 & 0 & 1 & -1 & 0 & 0 & 0 & 0& 0 & 0& 0\\
0 & 0 & 0 & 0 & 1 &-1 & 0 & 0 & 0& 0 & 0& 0\\
0 & 0 & 0 & 0 & 0 & 0 & 1 & -1 & 0& 0 & 0& 0\\
0 & 0 & 0 & 0 & 0 & 0 & 0 & 1 & -1& 0& 0& 0\\
0 & 0 & 0 & 0 & 0 & 0 & 0 & 0 & 0 & 1 & -1& 0\\
0 & 0 & 0 &0 & 0 & 0 & 0 & 0 & 0 & 0 & 1& -1\\
1 & 0 & 0 & -1 & 0 & 0 & 0 & 0 & 0 & 0& 0& 0\\
0 & 1 & 0 & 0 & -1 & 0 & 0 & 0 & 0 & 0& 0& 0\\
0 & 0 & 1 & 0 & 0 & -1 & 0 & 0 & 0 & 0& 0& 0\\
0 & 0 & 0 & 1 & 0 & 0 & -1 & 0 & 0 & 0& 0& 0\\
0 & 0 & 0 & 0 & 1 & 0 & 0 & -1 & 0 & 0& 0& 0\\
0 & 0 & 0 & 0 & 0 & 1 & 0 & 0 & -1 & 0& 0& 0\\
0 & 0 & 0 & 0 & 0 & 0 & 1 & 0 & 0 & -1& 0& 0\\
0 & 0 & 0 & 0 & 0 & 0 & 0 & 1 & 0 &0& 0& -1
\end{pmatrix}
\end{equation}

\begin{equation}\label{2dLmat}
L = \begin{pmatrix}
2 & -1 & 0 & -1 & 0 &0 & 0 & 0 &0&0&0&0 \\
-1 & 3 & -1 & 0 & -1 &0 & 0 & 0 &0&0&0&0\\
0 & -1 & 2 & 0 & 0 & -1 & 0 & 0 &0&0&0&0 \\
-1 & 0 & 0 & 3 & -1 &0 & -1 & 0 &0 &0&0&0 \\
0 & -1 & 0 & -1 & 4 & -1 & 0 &-1 & 0 &0&0&0\\
0 & 0 & -1 &0 & -1 & 3 & 0 &0 & -1 &0&0&0 \\
0 & 0 & 0 &-1 & 0 & 0 & 3 &-1 & 0 &-1&0&0 \\
0 & 0 & 0 &0 & -1 & 0 & -1 &4 & -1 &0&-1&0 \\
0 & 0 & 0 &0 & 0 & -1 & 0 &-1 & 3 &0&0&-1 \\
0 & 0 & 0 &0 & 0 & 0 & -1 &0 & 0 &2&-1&0 \\
0 & 0 & 0 &0 & 0 & 0 & 0 &-1 & 0 &-1&3&-1 \\
0 & 0 & 0 &0 & 0 & 0 & 0 &-1 & 0 &0&-1&2 
\end{pmatrix}
\end{equation}

\begin{equation}\label{2dKmat}
K = \begin{pmatrix}
-1 & 2 & -1 & 0 & 0 &0 & 0 & 0 &0&0&0&0 \\
0 & 0 & 0 & -1 & 2 &-1 & 0 & 0 &0&0&0&0\\
0 & 0 & 0 & 0 & 0 & 0 & -1 & 2 &-1&0&0&0 \\
0 & 0 & 0 & 0 & 0 &0 & 0 & 0 &0 &-1&2&-1 \\
-1 & 0 & 0 & 2 & 0 & 0 & -1 &0 & 0 &0&0&0\\
0 & -1 & 0 &0 & 2 & 0 & 0 &-1 & 0 &0&0&0 \\
0 & 0 & -1 &0 & 0 & 2 & 0 &0 & -1 & 0 &0&0 \\
0 & 0 & 0 &-1 & 0 & 0 & 2 &0 & 0 &-1&0&0 \\
0 & 0 & 0 &0 & -1 & 0 & 0 &2 & 0 &0&-1&0 \\
0 & 0 & 0 &0 & 0 & -1 & 0 &0 & 2 &0&0&-1 
\end{pmatrix}
\end{equation}

%If there are more than one appendix, then please refer to it
%as \ldots\ in Appendix \ref{appA}, Appendix \ref{appB}, etc.

%\section{Title of the second appendix}\label{appB}
%\subsection{First subsection of Appendix \protect\ref{appB}}

%Use the standard \LaTeX\ commands for headings in \verb|{appendix}|.
%Headings and other objects will be numbered automatically.
%\begin{equation}
%\mathcal{P}=(j_{k,1},j_{k,2},\dots,j_{k,m(k)}). \label{path}
%\end{equation}

%Sample of cross-reference to the formula (\ref{path}) in Appendix \ref{appB}.
\end{appendix}

%%%%%%%%%%%%%%%%%%%%%%%%%%%%%%%%%%%%%%%%%%%%%%
%% Support information, if any,             %%
%% should be provided in the                %%
%% Acknowledgements section.                %%
%%%%%%%%%%%%%%%%%%%%%%%%%%%%%%%%%%%%%%%%%%%%%%
\begin{acks}[Acknowledgments]
This work was partially supported by the Wallenberg AI, Autonomous Systems and Software Program (WASP) funded by the Knut and Alice Wallenberg Foundataion. 
\end{acks}

\bibliographystyle{imsart-nameyear} 
\bibliography{sample}
%% or include bibliography directly:
%\begin{thebibliography}{4}
%%

%\bibitem[\protect\citeauthoryear{Billingsley}{1999}]{r1}
%\textsc{Billingsley, P.} (1999). \textit{Convergence of
%Probability Measures}, 2nd ed.
%Wiley, New York.

%\bibitem[\protect\citeauthoryear{Bourbaki}{1966}]{r2}
%\textsc{Bourbaki, N.}  (1966). \textit{General Topology}  \textbf{1}.
%Addison--Wesley, Reading, MA.

%\bibitem[\protect\citeauthoryear{Ethier and Kurtz}{1985}]{r3}
%\textsc{Ethier, S. N.} and \textsc{Kurtz, T. G.} (1985).
%\textit{Markov Processes: Characterization and Convergence}.
%Wiley, New York.

%\bibitem[\protect\citeauthoryear{Prokhorov}{1956}]{r4}
%\textsc{Prokhorov, Yu.} (1956).
%Convergence of random processes and limit theorems in probability
%theory. \textit{Theory  Probab.  Appl.}
%\textbf{1} 157--214.

%\bibitem[\protect\citeauthoryear{Rinaldo}{2009}]{Rinaldo2009}
%\textsc{Rinaldo, A.} (2009).
%Properties and Refinements of the Fused Lasso. \textit{Theory  Probab.  Appl.}
%\textbf{1} 157--214.
%\end{thebibliography}

\end{document}